\RequirePackage[orthodox]{nag} 
\documentclass[english, a4paper, hidelinks]{scrartcl} 
\usepackage[utf8]{inputenc} 
\usepackage[T1]{fontenc}  
\usepackage{lmodern} 
\usepackage{pdfpages}
\usepackage{scrhack}
\usepackage[main = english, ngerman]{babel} 
\usepackage{csquotes}
\usepackage{amsmath,amssymb,amsthm,bm}  
\usepackage{mathtools}
\usepackage{blkarray}
\usepackage{tabularx}
\usepackage{booktabs} 
\usepackage{enumitem}
\usepackage{varwidth}
\usepackage{siunitx}  
\usepackage{setspace}
\usepackage{graphicx}
\usepackage{wrapfig}
\usepackage{subcaption}
\usepackage{hyperref}  

\usepackage{bookmark}
\usepackage[noabbrev, capitalise]{cleveref}

\usepackage{xcolor}
\usepackage{grffile}
\usepackage{float} 
\usepackage{tikz, pgf}
\usetikzlibrary{fit,decorations.pathreplacing,patterns,shapes}
\usetikzlibrary{plotmarks}
\usetikzlibrary{arrows.meta, automata}
\usetikzlibrary{calc,intersections}
\usetikzlibrary{external}
\usepackage{multirow}
\usepackage{multicol} 
\usepackage{pgfplots}
\pgfplotsset{compat=newest}
\usepgfplotslibrary{patchplots}

\usepackage{url}
\usepackage{verbatim}
\usepackage[usestackEOL]{stackengine}

\usepackage[color=orange!80, textsize=tiny]{todonotes}

\theoremstyle{plain}
\newtheorem{theorem}{Theorem}[section] 
\newtheorem*{theorem*}{Theorem}
\newtheorem{lemma}[theorem]{Lemma} 
\newtheorem*{lemma*}{Lemma} 
\newtheorem{korollar}[theorem]{Corollary} 
\newtheorem*{korollar*}{Corollary} 
 
\newtheorem*{satz*}{Satz} 
\theoremstyle{definition}
\newtheorem{definition}[theorem]{Definition}
\newtheorem*{definition*}{Definition}

\newtheorem*{example*}{Example}

\newtheorem{assumption}{Assumption}
\newtheorem*{assumption*}{Assumption}
 
\theoremstyle{remark} 
\newtheorem{remark}[theorem]{Remark}
\newtheorem*{remark*}{Remark}

\newtheorem*{anmerkung*}{Anmerkung}

\newcommand{\R}{\mathbb{R}}

\newcommand{\G}{\mathcal{G}}

\renewcommand{\P}{\mathcal{P}}
\newcommand{\pipe}{\mathrm{pipe}}
\newcommand{\V}{\mathcal{V}}
\newcommand{\E}{\mathcal{E}}
\renewcommand{\H}{\mathcal{H}}
\newcommand{\C}{\mathrm{comp}}

\newcommand{\dd}{\, \mathrm{d}}

\newcommand{\cl}{{\ell}}
\newcommand{\crr}{{\mathrm{r}}}

\newcommand{\mol}{{\mathrm{mol}}}

\newcommand{\cycle}{{\mathrm{cycle}}}
\newcommand{\sol}{{\mathrm{sol}}}

\newcommand{\mach}{\mathrm{Ma}}

\newcommand{\sign}{{\mathrm{sign}}}

\newcommand{\dx}{\frac{\partial}{\partial x}}

\newcommand{\rhong}{\rho_{\mathrm{NG}}}
\newcommand{\rhohh}{\rho_{\mathrm{H}_2}}
\newcommand{\qng}{q_{\mathrm{NG}}}
\newcommand{\qhh}{q_{\mathrm{H}_2}}
\newcommand{\png}{p_{\mathrm{NG}}}
\newcommand{\phh}{p_{\mathrm{H}_2}}

\newcommand{\fr}{{\mathrm{fr}}}

\DeclareMathOperator*{\argmin}{arg\,min}

\definecolor{rwth}   {RGB}{  0  84 159}
\definecolor{rwth-75}{RGB}{ 64 127 183}
\definecolor{rwth-50}{RGB}{142 186 229}
\definecolor{rwth-25}{RGB}{199 221 242}
\definecolor{rwth-10}{RGB}{232 241 250}

\definecolor{rot}   {RGB}{204   7  30}
\definecolor{orange}   {RGB}{246 168   0}
\definecolor{lila}   {RGB}{122 111 172}
\definecolor{bordeaux-50}{RGB}{205 139 135}

\usepackage[backend=biber,style=numeric,giveninits=true,hyperref=true,isbn=false,url=true,doi=true,maxnames=10]{biblatex}
\usepackage{a4wide}
\numberwithin{equation}{section} 
\title{Logbook}
\subtitle{Existence of Steady States to the Mixture Model (again)}
\author{Alena Ulke}
\date{\today}
\begin{document}
%
%
%
\begin{center}
    \textbf{\textrm{\Large On the Existence of Steady States for Blended Gas Flow with Non-Constant Compressibility Factor on Networks}} \\[10pt]
    S. G\"ottlich\footnotemark[1], M. Schuster\footnotemark[2], A. Ulke\footnotemark[1]
\end{center}
\footnotetext[1]{University of Mannheim, Department of Mathematics, 68131 Mannheim, Germany (ulke@uni-mannheim.de, goettlich@uni-mannheim.de)}
\footnotetext[2]{Friedrich-Alexander University Erlangen-Nürnberg, Department of Mathematics, 91058 Erlangen, Germany (michi.schuster@fau.de)}


\paragraph*{Abstract}
In this paper, we study hydrogen-natural gas mixtures transported through pipeline networks. The flow is modeled by the isothermal Euler equations with a pressure law involving a non-constant, composition-dependent compressibility factor. For a broad class of such compressibility models, we prove the existence of steady-state solutions on networks containing compressor stations. The analysis is based on an implicit representation of the pressure profiles and a continuity argument that overcomes the discontinuous dependence of the gas composition on the flow direction. Numerical examples illustrate the influence of different compressibility models on the resulting states.
%
\section{Introduction and Motivation}

The ongoing transformation of the global energy system towards climate neutrality has placed hydrogen at the center of scientific and industrial discussions. As an energy carrier, hydrogen possesses several highly attractive properties. 
It can be produced from renewable energy sources such as wind or solar power, and its use does not generate carbon dioxide emissions, releasing only water and heat \cite{Bundesregierung2024, EuropeanCommission2020}. 
Moreover, hydrogen is regarded as a key enabler for the decarbonization of sectors that cannot be easily electrified, including parts of heavy industry and long-distance transport. In addition, hydrogen provides flexibility within the energy system by enabling large-scale energy storage and sector coupling \cite{EuropeanCommission2020, USEnergy2017}, which underlines hydrogen’s anticipated relevance in future energy infrastructures.  

Despite its promising potential, hydrogen will not immediately replace natural gas as a primary energy source. Instead, a gradual integration into existing infrastructures is currently considered the most feasible pathway. In this context, blending hydrogen into natural gas pipeline networks has emerged as a pragmatic transition strategy, allowing for a reduction in greenhouse gas emissions while relying on well-established transport systems. Large-scale hydrogen transport networks are already under development worldwide, cf. \cite{HydrogenEU2022} for Europe, \cite{HydrogenASIA2023} for Asia, and \cite{HydrogenUSA2023} for the United States. Nevertheless, for the foreseeable future, hydrogen-natural gas mixtures are expected to dominate practical applications rather than pure hydrogen flows.

The safe and efficient operation of pipeline networks carrying hydrogen-enriched natural gas requires a detailed mathematical understanding of the underlying flow processes. While the transport of pure natural gas (or any other single gas) can be accurately described by the isothermal Euler equations, cf. \cite{BandaHertyKlar2006,Gas-Modell:DomschkeHillerLangTischendorf2017,GugatUlbrich2018,KochEtAl2015}:
\begin{subequations}
    \label{eq:iso1}
	\begin{align}
		\partial_t \rhong(t,x) + \partial_x \qng(t,x) &= 0, \label{eq:isoMassConservation} \\
		\partial_t \qng(t,x) + \partial_x \bigg[ \png(\rhong(t,x)) + \frac{\qng^2(t,x)}{\rhong(t,x)} \bigg] &= - \frac{\lambda_{\text{fr}}}{2D} \frac{\qng(t,x) \vert \qng(t,x) \vert}{\rhong(t,x)}, \label{eq:isoMomentumBalance}
	\end{align}
\end{subequations}
the transport of a two-gas-mixture can be modeled by mass conservation \eqref{eq:isoMassConservation} for each gas and momentum balance \eqref{eq:isoMomentumBalance} for the mixing, cf. \cite{BotheDreyer2015, Kazi2024, MalekSoucek2022}, i.e., 
\begin{subequations}\label{eq:dynamicEquation}
    \begin{align}
        \partial_t \, \rhong(t,x) + \partial_x \, \qng(t,x) &= 0, \\
        \partial_t \, \rhohh(t,x) + \partial_x \, \qhh(t,x) &= 0, \\
        \partial_t \, q(t,x) + \partial_x \left[ p(\rhong(t,x), \rhohh(t,x)) + \frac{q^2(t,x)}{\rho(t,x)} \right] 
            & = - \frac{\lambda_\fr}{2D} \frac{q(t,x) \vert q(t,x) \vert}{\rho(t,x)}.
    \end{align} 
\end{subequations}
The time-space domain is $(t,x) \in [0,t_{\mathrm{final}}] \times [0,L]$. Furthermore, $\rhong, \rhohh$ describe the densities and $\qng, \qhh$ the flow of natural gas and hydrogen, $\rho = \rhong + \rhohh$ and $q = \qng + \qhh$ are the density and the flow of the mixture, respectively. Further, $D > 0$ is the pipe diameter and $\lambda_{\fr}$ the pipe friction coefficient. The pressure-density relation of the gas is described by an appropriate state equation or pressure law. For pure natural gas, the pressure is commonly expressed as
\begin{equation*}
	\png(\rhong) = \frac{R \, T}{M_{\text{NG}}} Z(\png) \rhong,
\end{equation*}
where $R$ denotes the universal gas constant, $M_{\text{NG}}$ the molar mass of natural gas, $T$ the constant gas temperature, and $Z(\png)$ the compressibility factor. 

For natural gas transport, the modeling of the compressibility factor is well established in the literature. Common choices include a constant approximation corresponding to the \textit{ideal gas law}, cf. \cite{DomschkeKolbLang2022,Gugat2024,SchusterDiss2021}, linear approximations such as those proposed by the \textit{American Gas Association}, cf. \cite{Gugat2018,Schmidt2016}, and quadratic models based on \textit{Papay’s formula}, cf.  \cite{Schmidt2016,WaltherHiller2017}. These approximations are widely used in both stationary and transient gas network models and have proven to yield reliable results for pure natural gas flow. 
However, for gas mixtures, well established choices are still missing. While the molar mass of a mixture is typically defined as a convex combination of molar masses of the individual constituents, no universally accepted model exists for the corresponding compressibility factor.
Several approaches have been proposed in the literature, primarily in the context of numerical simulations. 
For instance, \cite{Pfetsch2024,Ulke2025} approximates the compressibility factor of the mixture by a convex combination of constant compressibility factors, whereas \cite{Brodskyi2025} employs a convex combination of linear compressibility models. 
Further, \cite{Bermudez2022} adopts a different strategy by combining critical temperatures and molar masses in order to apply a quadratic compressibility factor. 

Although these approaches are all inspired by well-established compressibility models for pure natural gas, their extension to hydrogen-natural gas mixtures is not unique. Different modeling choices lead to distinct pressure-density relations and, consequently, to noticeably different steady states, even under identical boundary conditions, as shown in \cref{fig:simulationCompressibilityFactor}. For the constant compressibility factor \cite{Pfetsch2024,Ulke2025}, we apply
\begin{equation} \label{eq:compressibilityConstant}
    Z(\png) = Z(\phh) = 1,
\end{equation}
and for the linear model \cite{Brodskyi2025,Schmidt2016}, we use
\begin{equation} \label{eq:compressibilityLinear}
    Z(p) = 1 + \big( \eta\ \alpha_{\text{H}_2} + (1 - \eta) \alpha_{\text{NG}} \big) p \qquad \text{with} \qquad \alpha_X = \frac{0.257}{p_{c,X}} - \frac{0.5333\ T_{c,X}}{p_{c,X}\ T},
\end{equation}
where $X \in \{ \text{H}_2, \text{NG} \}$, $p$ is the pressure of the mixture and $\eta$ is the percentage of hydrogen in the mixture. Further, $T$ is the gas temperature and $p_{c,X}, T_{c,X}$ are the critical pressure and temperature of the gases. The quadratic model \cite{Schmidt2016,Bermudez2022} is given by
\begin{equation} \label{eq:compressibilityQuadratic}
    Z(p) = 1 - 3.52 \exp\Big( -2.26 \frac{T}{T_c} \Big) \frac{p}{p_c} + 0.274 \exp\Big( -1.878 \frac{T}{T_c} \Big) \frac{p^2}{p^2_c},
\end{equation}
where $p_c, T_c$ are the critical pressure and temperature of the mixture, computed by the convex combination of the critical pressure and temperature of the gases. The simulation results in \cref{fig:simulationCompressibilityFactor} demonstrate two effects: first, a clear difference between constant and non-constant compressibility models, with the non-constant models nearly coinciding for pure natural gas, and second, a growing discrepancy between the models as the hydrogen percentage increases. The lack of a canonical compressibility model for gas mixtures indicates that analytical results should not rely on a specific functional form of the compressibility factor. 

\begin{figure}[htbp]
	\centering
   \includegraphics[width=\textwidth, trim=50mm 0mm 40mm 0mm, clip]{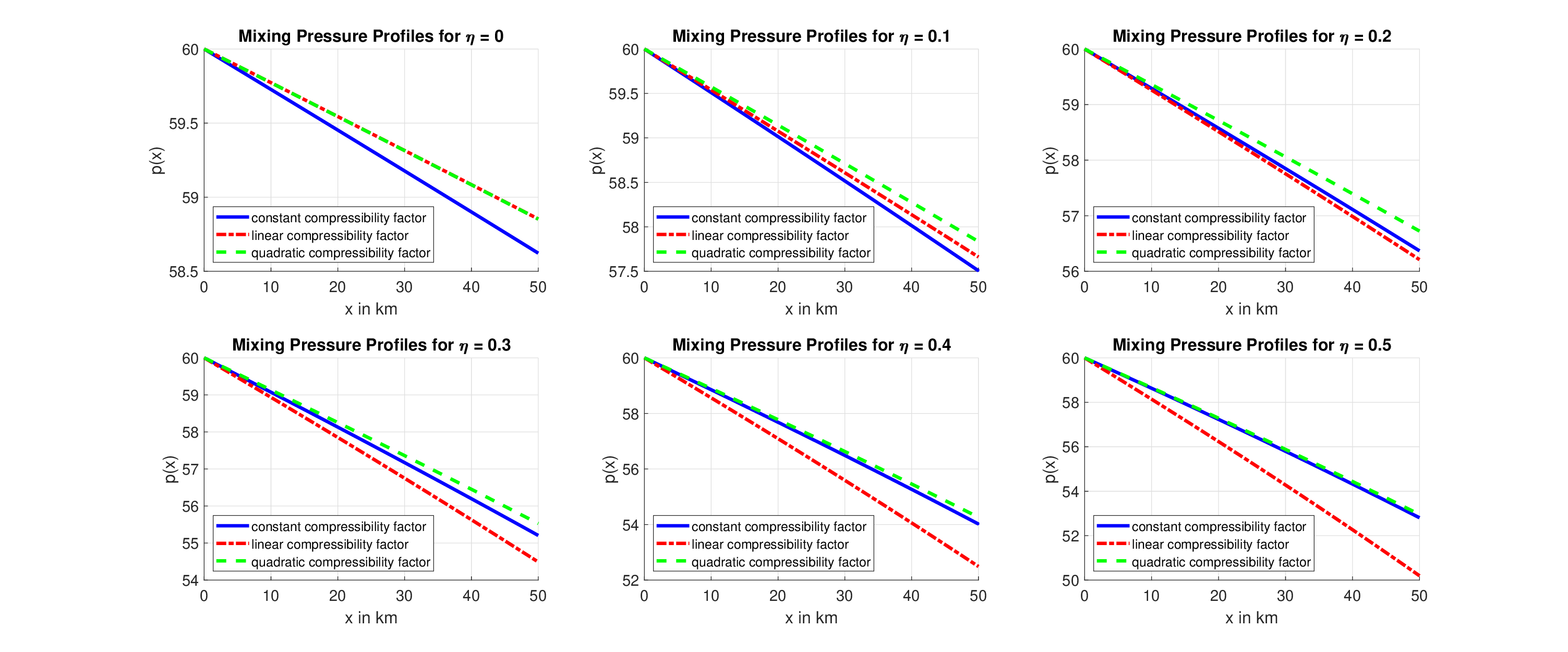}
	\caption{Simulation results for the compressibility factor models \eqref{eq:compressibilityConstant} - \eqref{eq:compressibilityQuadratic} on a single pipe with $L = 50$ km, $\lambda_\fr = 0.05$ and $D = 0.5$ m. The gas parameters are given by $R = 8.3145$ J/(mol K)$^{-1}$, $T = 283.15$ K, $p_{c,\text{H}_2} = 13.15$ bar, $p_{c,\text{NG}} = 46.01$ bar, $T_{c,\text{H}_2} = 33.19$ K and $T_{c,\text{NG}} = 204.62$ K.
    }
	\label{fig:simulationCompressibilityFactor}
\end{figure}

For the mixing model with a constant compressibility factor, the existence of steady states has been shown in \cite{Ulke2025} for pure transport networks.
In contrast, the existence of steady-state solutions for the mixture model with non-constant compressibility factor and networks containing controllable elements such as compressor stations remains an open question.
This gap motivates the present work, in which we establish the existence of steady states for a broad class of compressibility factors and extend the analysis to networks with compressor stations.
%
The existence proof in \cite{Ulke2025} heavily utilizes that the pressure change along a path depends continuously on the boundary data, despite the discontinuous dependence of the composition on the flow direction.
The key observation to establish the continuous dependence on the boundary data is that
in the steady-state case, \cref{eq:dynamicEquation} reduces to an ordinary differential equation (ODE) for the pressure profile.
For ideal gas, this ODE admits a closed-form solution allowing us to analyze the influence of the discontinuities in composition on the pressure change explicitly.
For arbitrary compressibility factors, however, such a closed-form solution is unavailable in general. Thus, our key contribution is addressing the resulting technical difficulties. 

\medbreak
This article is structured as follows: In Section 2, we introduce the gas flow model for mixtures on networks and derive an implicit solution of the ordinary differential equation arising from steady-state version of \cref{eq:dynamicEquation} for arbitrary compressibility factors.
Afterwards, we establish the existence of steady-state solutions for tree-shaped networks and networks with a single cycle in Section 3. 
Finally, in Section 4, we conclude with a numerical study showcasing the influence of different choices of the compressibility factor \(Z\) on the steady-state solution of the gas flow model for mixtures.

\section{Modeling Gas Flow of Mixtures on Networks}
Gas networks involve different elements, e.g., producers and consumers that deliver gas to and withdraw gas from the network, pipes to transport gas, and compressors to regulate pressure levels in the network. 
In the following, we introduce essential terminology and notation before we delve into the modeling of gas flow. 

Let a directed, connected graph \( \G = (\V, \E)\) with the set of nodes \(\mathcal{V}\), which represent network junctions and serve as gas entry or exit points, and the set of edges \(\mathcal{E} \subseteq \mathcal{V} \times \mathcal{V}\) be given. Each edge \(e \in \mathcal{E}\) either represents a pipe of length \(L_e > 0\) or a compressor station with compressor ratio \( \gamma_e \geq 1\), and we accordingly partition the edge set into the two disjoint subsets \(\E_\pipe \subseteq\E\) and \( \E_\C \subseteq \E\) of pipes and compressors, respectively. For an edge \(e = (v,w) \in \mathcal{E}\), let \(f(e) = v\) and \(h(e) = w\) denote the \emph{start} of \emph{foot node} and the \emph{end} or \emph{head node}, respectively. 
The set of nodes is given by the two disjoint sets \( \V_{<0}\), representing the \emph{supply nodes}, where gas is injected into the network, and \( \V_{\ge0}\), representing the \emph{demand nodes}, where mass is conserved or gas is withdrawn from the network. 
For each node, we denote the set of \emph{incoming} and \emph{outgoing edges} by
\begin{equation*}
	\E_-(v) = \{e \in \E \mid e = (\bullet, v)\}
	\quad \text{and} \quad
	\E_+(v) = \{e \in \E \mid e = (v, \bullet)\},
\end{equation*}
respectively. The set \( \E(v) = \E_-(v) \cup \E_+(v)\) contains all edges incident to \(v\). Further, we require the \emph{node edge incidence matrix} \( A \in \R^{\vert \V\vert \times \vert \E \vert}\) with entries:
\begin{equation}\label{eq: incidence matrix}
	a(v,e) = \begin{cases}
		-1, & \text{if } v = f(e), \\
		1, & \text{if } v = h(e), \\
		0, & \text{otherwise}.
	\end{cases}
\end{equation}
In order to describe gas flow on a network, we split the dynamic into three: the flow along pipes, the flow across nodes, and the operation of compressors.

\subsection{Gas Flow through Pipes}

The gas flow in pipeline networks is governed by the \textit{isothermal Euler equations} \eqref{eq:iso1}, a system of hyperbolic partial differential equations (PDEs). Isothermality implies constant gas temperature over space and time. Since we are interested in the flow of mixtures consisting of hydrogen and natural gas, we consider a framework allowing for mixtures composed of two arbitrary constituents \( i \in \{1,2\}\). 
We assume perfect mixing, i.e., both constituents have the same velocity and temperature.  Within this framework, we can model the flow of hydrogen and natural gas by setting hydrogen as constituent \( i = 1\) and natural gas as constituent \( i= 2\), cf. \eqref{eq:dynamicEquation}. Let $\eta_e$ be the mass fraction of the first constituent (i.e., hydrogen) in the mixture on edge $e \in \mathcal{E}$. Thus, \eqref{eq:dynamicEquation} is equivalent to the system:
\begin{subequations} \label{eq: Euler}
	\begin{align}
		\partial_t \rho_e + \partial_x q_e & = 0, \\
		\partial_t\left(\eta_e \rho_e\right) + \partial_x \left(\eta_e q_e\right) & = 0, 
		\vphantom{\left[ p_e + \frac{q_e^2}{\rho_e} \right]}\\
		\partial_t q_e + \partial_x \left[ p_e + \frac{q_e^2}{\rho_e} \right] & = - \frac{\lambda_\fr}{2 D_e} \frac{q_e \vert q_e \vert}{\rho_e}, \label{eq: Euler pressure loss}
	\end{align}
\end{subequations}
where \(\rho_e\) denotes the gas density, \( q_e\) the mass flow, \( p_e\) the pressure of the gas mixture. The constants \( D_e\) and \( \lambda_\fr\) denote the diameter and friction factor of the pipe, respectively.
The pressure and density are connected by the state equation
\begin{equation*}
	p_e(\rho_e) = \frac{R \, T}{M} Z(p_e) \rho_e,
\end{equation*}
where \( R\) is the universal gas constant, \( T\) the temperature, \( M\) the molar mass, and \(Z(p_e) > 0\) the compressibility factor of the gas. 
When analyzing the flow of gas mixtures, it is important to note that variations in the composition result in varying gas properties, including the molar mass and compressibility factor.
However, since the gas mixture is always composed of the same fixed constituents, we can express the gas properties of a mixture based on the composition and properties of its constituents, i.e., the molar mass \( M \) and the compressibility factor \( Z\) also depend on the gas composition.
For the remainder of this section, we omit the subscript \(e\) to improve the readability. 

We determine the molar mass of a gas mixture based on the molar masses \( M_i\) of its constituents, along with the \emph{molar fraction} \(\eta_\mol\) of the first constituent, cf.  \cite{Bermudez2022,Pfetsch2024}. The molar mass \( M\) of the mixture is then given by
\begin{equation}\label{eq: molar mass in molar fraction}
	M = \eta_\mol M_1  + (1 - \eta_\mol) M_2.
\end{equation}
We can also express the molar mass \( M\) in terms of mass fraction \( \eta\) by using the relation:
\begin{equation}\label{eq: mass fraction to molar fraction}
	\eta_\mol = \frac{n_1}{n_1 + n_2} \quad \text{with} \quad 
	n_1 = \frac{\eta}{M_1}, \quad n_2 = \frac{1-\eta}{M_2},
\end{equation}
where \( n_i\) is the number of moles of constituent \(i\) in the mixture. We substitute the molar fraction \( \eta_\mol\) in \cref{eq: molar mass in molar fraction} by using the relation~\labelcref{eq: mass fraction to molar fraction}, which yields:
\begin{equation}
	M = M(\eta) = \frac{M_1 M_2}{\eta M_2 + (1- \eta) M_1}
	\quad \Leftrightarrow \quad \frac{1}{M(\eta)} = \frac{\eta}{M_1}  + \frac{(1 - \eta)}{M_2}.
\end{equation}

Next, we model dependence of the compressibility factor \( Z\) on the gas composition, which we express using the mass fraction since the relation~\labelcref{eq: mass fraction to molar fraction} allows us to convert the mass fraction \( \eta\) into the molar fraction \( \eta_\mol\) and vice versa:
\begin{align}\label{eq: conversion mass and molar fraction}
	\eta_\mol & = \frac{\eta \, M_2 }{\eta\, M_2 + (1 - \eta) \, M_1} \qquad \text{and} \qquad
	\eta = \frac{ \eta_\mol M_1}{\eta^\mol M_1 +  (1 - \eta_\mol) M_2}.
\end{align}
As there exist multiple approaches to model the compressibility factor, cf. \cite{Greg2008}, we adopt a general approach and treat the compressibility factor as a function of the pressure \(p\) and the mass fraction \( \eta\), satisfying the following assumptions:
\begin{assumption}\label{assumption}
	The compressibility factor \( Z(\eta, p)\) is integrable and differentiable with respect to \(p\) for every mass fraction \( \eta \in [0,1]\)  and pressure \( p \in [0,\infty)\).
\end{assumption}
\begin{assumption}\label{assumption 2}
	The compressibility factor \( Z(\eta, p)\) is differentiable with respect to \(\eta\) for every mass fraction \( \eta \in [0,1]\) and pressure \( p \in [0,\infty)\).
\end{assumption}
\noindent 
Finally, we obtain the pressure law for gas mixtures linking the pressure \(p\), the density \(\rho\) and the composition of the gas, which is encoded through the mass fraction \( \eta\):
\begin{equation}\label{eq: pressure law}
	p(\rho) = \frac{R \, T}{M(\eta)} \, Z(\eta, p) \, \rho
	= \left[\frac{R \, T}{M_1} \eta  + \frac{R \, T}{M_2} (1 - \eta)\right] Z(\eta, p) \, \rho.
\end{equation}


	%

\medbreak
Since we aim to analyze the steady state gas flow, the temporal derivatives in the Euler equations~\labelcref{eq: Euler} vanish. As a result, the first two equations in the isothermal Euler equations imply that the mass flow \( q\) and the mass fraction \(\eta\) are constant along the pipe, reducing the PDE system to an ordinary differential equation (ODE):
\begin{align} \label{eq: pressure ODE}
	\partial_x \left[ p(x) + \frac{q^2}{\rho(x)} \right] 
	& = -\frac{\lambda_\fr}{2 D} \frac{q \vert q \vert}{\rho(x)}
	\quad \text{for} \quad x \in [0, L].
\end{align}

In the following, we solve this ODE to obtain the pressure profile \(p(x)\) along the pipe and split the derivation of the pressure profile into two steps:
\begin{enumerate}[label=(\roman*)]
	\item We eliminate the density \( \rho\) from the ODE using the pressure law~\labelcref{eq: pressure law}, which yields an ODE depending only on the variables \(q\), \(\eta\), and \(p\). \label{item: eliminate}
	\item We solve the ODE from~\labelcref{item: eliminate} for the pressure \(p\) using separation of variables.
\end{enumerate}
We begin by transforming the ODE~\labelcref{eq: pressure ODE} into an ODE independent of the density \( \rho\). First, we consider the right side of the equation and invoke  the pressure law~\labelcref{eq: pressure law}:
\begin{equation*}
	- \frac{\lambda_\fr}{2 D} \frac{q \vert q \vert}{\rho}
	= - \frac{\lambda_\fr}{2 D} \frac{R \, T}{M(\eta)} \frac{Z(\eta, p)}{p} q \vert q \vert.
\end{equation*}
Next, we consider the left side of the equation, where we additionally invoke the chain rule, which results in:
\begin{align*}
	\partial_x \left[p + \frac{q^2}{\rho} \right]
	= \partial_x \left[p +  \frac{R \, T} {M(\eta)} q^2 \frac{Z(\eta, p)}{p} \right] 
	& = \partial_x p + \frac{R \, T}{M(\eta)} q^2 \, \partial_p \left[ \frac{Z(\eta, p)}{p} \right] \partial_x p, \\
	& = \left[ 1 + \frac{R \, T}{M(\eta)} q^2 \left(\frac{\partial_p Z(\eta, p)}{p} 
	- \frac{Z(\eta,p)}{p^2} \right) \right] \partial_x p.
\end{align*}
Recall that the mass flow \(q\) and the mass fraction \( \eta \) are constant along the pipe.
Combining both, we obtain the following formulation for the pressure loss ODE~\labelcref{eq: pressure ODE}:
\begin{align}
	&& \left[ 1 + \frac{R \, T}{M(\eta)} q^2 \left(\frac{\partial_p Z(\eta, p)}{p} 
	- \frac{Z(\eta,p)}{p^2} \right) \right] \partial_x p
	& = - \frac{\lambda_\fr}{2 D} \frac{R \, T}{M(\eta)} \frac{Z(\eta, p)}{p} q \vert q \vert, \nonumber \\
	\Leftrightarrow && \underbrace{\left[ \frac{p}{Z(\eta, p)} + \frac{R \, T }{M(\eta)} \, q^2 \, \left( 
		\frac{\partial_p Z(\eta, p)}{Z(\eta, p)} - \frac{1}{p} \right) \right]}_{ = F^{\prime}(\eta, q, p)} \partial_x p
	& = - \frac{\lambda_\fr}{2 D} \frac{R \, T}{M(\eta)} q \vert q \vert.  \label{eq: pressure ODE reduced}
\end{align}

The next step is to solve the ODE using separation of variables. Since we have already manipulated the equation into a form suitable for direct integration with respect to \(x\), we only need an anti-derivative of \(F^{\prime}\), which -- up to the integration constant -- reads:
\begin{align}\label{eq: F}
	F(\eta, q, p) & = \int \frac{p}{Z(\eta, p)} \dd p + \frac{R \, T \, q^2}{M(\eta)} \left[ \ln(\vert Z(\eta, p)\vert) - \ln (\vert p \vert) \right]. 
\end{align}
Thus, integrating the ODE~\labelcref{eq: pressure ODE reduced} with respect to \(x\) yields:
\begin{equation}\label{eq: potential formulation}
	F(\eta, q, p(x)) = F(\eta, q, p(0)) - \frac{\lambda_\fr}{2 D} \frac{R \, T}{M(\eta)} q \vert q \vert \, x \quad \text{for} \quad x \in [0,L].
\end{equation}

\begin{remark}\label{remark: choice Z}
	As there are a variety of approaches to model the compressibility factor \( Z\) of a gas (mixture), we provide a brief overview. We also determine the integral \( \int \frac{p}{Z(\eta,p)} \dd p\) for these examples, since it does not have a closed form expression for general \(Z\).
	\begin{itemize}
        \item For a constant approximation of the compressibility factor $Z(\eta,p) = k \in \mathbb{R}$ (cf., \eqref{eq:compressibilityConstant} and \cite{Pfetsch2024,SchusterDiss2021,Gugat2024,Ulke2025}), we obtain:
		\begin{equation*}
			\int \frac{p}{Z(\eta,p)} \dd p = \frac{1}{2k} p^2.
		\end{equation*}
		\item For a linear approximation of the compressibility factor $Z(\eta,p) = 1 + \alpha p$, where $\alpha$ is given by the convex combination of the gas specific constants $\alpha_{\text{H}_2}$ and $\alpha_{\text{NG}}$ (cf., \eqref{eq:compressibilityLinear} and \cite{Brodskyi2025,Schmidt2016,Gugat2018,Gugat2018b}), we obtain
        \begin{align*}
			\int \frac{p}{Z(\eta,p)} \dd p = \frac{p}{\alpha_1 \eta + \alpha_2 ( 1 - \eta)} 
			- \frac{\ln(\vert Z(\eta, p) \vert)}{\left[ \alpha_1 \eta + \alpha_2 ( 1 - \eta) \right]^2}.
		\end{align*}
        \item For a quadratic approximation of the compressibility factor \eqref{eq:compressibilityQuadratic} (cf., \textit{Papay's formula}, see \cite{Bermudez2022,Schmidt2016,WaltherHiller2017}), where $T_c(\eta_\mol)$ and $p_c(\eta_\mol)$ are critical temperature and pressure, obtained by a convex combination of the critical temperature and pressure of the components, the integral in \eqref{eq: F} reads
        \begin{align*}
			\int \frac{p}{Z(\eta_\mol, p)} \dd p 
			& = \frac{\ln(\vert Z(\eta_\mol, p) \vert)}{2 b} - \frac{a}{b \sqrt{4b - a^2}} \arctan\left(\frac{2b x + a}{\sqrt{4b - 2}}\right), \\
			\text{where} \quad a  & = \frac{1}{p_c(\eta_\mol)} \exp\left(-\frac{T}{T_c(\eta_\mol)}\right), \quad
			b  = \frac{1}{p_c(\eta_\mol)^2} \exp\left(-\frac{T}{T_c(\eta_\mol)} \right).
		\end{align*}
	\end{itemize}
\end{remark}

We obtain the pressure profile \(p(x)\) from \cref{eq: potential formulation} if the function \( F\) is invertible with respect to \(p\).
In the regularity analysis of \(F\), we make extensive use of one key physical property: gas networks operate under subsonic conditions, i.e., the gas velocity \(v\) is smaller than the sound speed \(\sigma\) in the gas, to ensure system safety and prevent structural damage \cite{Gas-Modell:DomschkeHillerLangTischendorf2017,Gugat2015}. 
For subsonic flow, the Mach number \( \mach \coloneqq v / \sigma  \) satisfies \( \mach < 1\), which leads to:
\begin{align}\label{eq: subsonic flow}
	\mach^2 = \left( \frac{v}{\sigma} \right)^2 < 1 
	\quad \Leftrightarrow \quad
	\frac{R \, T}{M(\eta)} \, \frac{q^2}{p^2} \left[ Z(\eta, p) - p \cdot \partial_p Z(\eta, p) \right] < 1,
\end{align}
where we used the pressure law~\labelcref{eq: pressure law}, the relation \( q = v \rho\), and the following expression for the sound speed:
\begin{align*}
	\left( \frac{1}{\sigma} \right)^2 
	= \partial_p \, \rho
	= \partial_p \left[ \frac{M(\eta)}{R \, T} \, \frac{p}{Z(\eta, p)}\right]
	= \frac{M(\eta)}{R \, T} \, \frac{Z(\eta, p) - p \cdot \partial_p Z(\eta, p)}{Z(\eta, p)^2}.
\end{align*}
Since we are in a subsonic regime, we define the domain of subsonic flow by multiplying \cref{eq: subsonic flow} with \(p^2\):
\begin{equation*}
	D_{\mathrm{subsonic}} \coloneqq \bigg\{ (\eta, q, p) \in [0,1] \times \R \times [0,\infty) \mathrel{\bigg|} p^2 - \frac{R \, T}{M(\eta)} q^2 \left[ Z(\eta, p) - p \, \partial_p Z(\eta, p) \right]	> 0 \bigg\}.
\end{equation*}

We proceed by proving that the function \(F\) is strictly monotonically increasing with respect to \(p\) for subsonic flows. 

\begin{lemma}\label{lemma: F}
	Let \( D_F \subseteq \{ (\eta, q, p) \in [0,1] \times \R \times [0,\infty) \mid Z(\eta, p) > 0\}  \cap D_{\mathrm{subsonic}}\) and define
	\begin{align*}
		F \colon D_F \rightarrow \R \quad  \text{with} \quad 
		F(\eta, q, p) & = \int \frac{p}{Z(\eta, p)} \dd p + \frac{R \, T \, q^2}{M(\eta)} \ln \left( \bigg\vert\frac{Z(\eta, p)}{p} \bigg\vert \right).
	\end{align*}
	Then, the function \( F\) is differentiable and strictly monotonically increasing on \( D_F \).
\end{lemma}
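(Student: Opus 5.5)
The proof is a direct computation of \(\partial_p F\), followed by a sign argument based on the subsonic condition; monotonicity means monotonicity in \(p\) for fixed \((\eta,q)\).

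\textbf{Differentiability.} By \cref{assumption}, \(p \mapsto Z(\eta,p)\) is differentiable. On \(D_F\) we have \(Z>0\), and the subsonic inequality forces \(p>0\): at \(p=0\) it would read \(-\frac{RT}{M(\eta)}q^2 Z(\eta,0) > 0\), which is impossible since \(Z>0\). Hence \(p\mapsto p/Z(\eta,p)\) is continuous, so its antiderivative is differentiable with derivative \(p/Z(\eta,p)\). The logarithmic term is \(\ln Z(\eta,p) - \ln p\), which is differentiable because \(Z>0\) and \(p>0\). This gives
\begin{equation*}
\partial_p F(\eta,q,p) = \frac{p}{Z(\eta,p)} + \frac{RT\,q^2}{M(\eta)}\left(\frac{\partial_p Z(\eta,p)}{Z(\eta,p)} - \frac{1}{p}\right),
\end{equation*}
which is exactly \(F'\) in \eqref{eq: pressure ODE reduced}.

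\textbf{Sign of the derivative.} Bring the terms to the common denominator \(pZ(\eta,p)>0\):
\begin{equation*}
\partial_p F(\eta,q,p) = \frac{1}{p\,Z(\eta,p)}\left( p^2 - \frac{RT}{M(\eta)}q^2\big[Z(\eta,p) - p\,\partial_p Z(\eta,p)\big]\right).
\end{equation*}
The bracketed numerator is precisely the expression required to be positive in \(D_{\mathrm{subsonic}}\). It is therefore positive on \(D_F\), and since the prefactor \(1/(pZ)\) is positive, \(\partial_p F>0\).

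\textbf{From positive derivative to strict monotonicity.} For fixed \((\eta,q)\), take two pressures \(p_1<p_2\) with \((\eta,q,p)\in D_F\) for all \(p\in[p_1,p_2]\). The mean value theorem then gives \(F(\eta,q,p_2)-F(\eta,q,p_1)>0\).

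The main subtlety is this last step. \(D_F\) need not be convex in \(p\), since the subsonic region may consist of several \(p\)-intervals. Strict monotonicity can therefore only be claimed along connected \(p\)-segments, which is what invertibility along a pipe requires, because the pressure profile \(p(x)\) is continuous. I would state this restriction explicitly. The remaining steps are routine computation.
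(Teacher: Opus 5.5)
Your proposal is correct and follows the paper's proof: compute $\partial_p F$, put it over the common denominator $p\,Z(\eta,p)$, and read off positivity from the subsonic condition together with $Z>0$. Your additional points make explicit what the paper leaves implicit: the subsonic inequality excludes $p=0$, and strict monotonicity is only guaranteed on connected $p$-segments of $D_F$. Your numerator $Z - p\,\partial_p Z$ is also the correct one, where the paper's restated inequality has a typo.
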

\begin{proof}
	The differentiability of \(F\) follows directly from its derivation since it arises as an anti-derivative when solving the differential equation~\labelcref{eq: pressure ODE}. The derivative of \( F\) with respect to \( p\) is given by
	\begin{align*}
		\partial_p F(\eta, q, p) =  \frac{p}{Z(\eta, p)} 
		+ \frac{R \, T}{M(\eta)} \, q^2 \left[ \frac{\partial_p Z(\eta, p)}{Z(\eta, p)} - \frac{1}{p} \right] 
		= \frac{p^2 - \frac{R \, T}{M(\eta)} \, q^2 \, \left[Z(\eta, p) -  p \cdot \partial_p Z(\eta, p)\right]}{p \cdot Z(\eta, p)}.
	\end{align*}	
	Since we are in a subsonic regime and due to the choice of \( D_F\), we have
	\begin{align*}
		p^2 - \frac{R \, T}{M(\eta)} q^2 \left[ \partial_p Z(\eta, p) - Z(\eta, p) \right] > 0
		\quad \text{and} \quad 
		Z(\eta, p) \ge 0.
	\end{align*}
	Thus, \( \partial_p F(\eta, q, p) > 0\) holds, which means that \( F\) is strictly monotonically increasing.
\end{proof}
Together with the Implicit Function Theorem, \cref{lemma: F} implies that \(F\) is locally bijective with respect to the pressure \(p\).

\begin{korollar}\label{lemma: inverse of F}
	The function \( F \colon D_F \rightarrow \R\) defined in \cref{lemma: F} is locally bijective with respect to the pressure \( p\). Further, the corresponding inverse \( (\eta, q, y) \mapsto g(\eta, q, y)\) is continuous and satisfies:
	\begin{equation*}
		F(\eta, q, p) = y \quad \Leftrightarrow \quad p = g(\eta, q, y).
	\end{equation*} 
\end{korollar}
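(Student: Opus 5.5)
The argument rests on \cref{lemma: F}. Fix $(\eta,q)$ and look at the slice $I_{\eta,q} \coloneqq \{p \ge 0 \mid (\eta,q,p) \in D_F\}$. On each connected component $J$ of this slice (an interval) the map $p \mapsto F(\eta,q,p)$ is differentiable with $\partial_p F > 0$. A strictly increasing continuous function on an interval is a bijection onto its image $F(\eta,q,J)$, which is again an interval. This gives the local bijectivity in $p$ and defines the inverse $g(\eta,q,\cdot)\colon F(\eta,q,J)\to J$, so that $F(\eta,q,p)=y \Leftrightarrow p=g(\eta,q,y)$ by construction.

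For the dependence on all three variables $(\eta,q,y)$, I would use the Implicit Function Theorem applied to
\begin{equation*}
	\Phi(\eta,q,y,p) \coloneqq F(\eta,q,p)-y
\end{equation*}
near any point $(\eta_0,q_0,y_0,p_0)$ with $\Phi=0$. The theorem needs two things.
\begin{enumerate}[label=(\alph*)]
	\item $\Phi$ must be $C^1$, or at least continuous with $\partial_p\Phi$ continuous in all variables, in a neighbourhood of the point.
	\item $\partial_p\Phi=\partial_pF(\eta_0,q_0,p_0)\neq 0$.
\end{enumerate}
Item (b) is exactly \cref{lemma: F}.

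For item (a), I would check continuity of each term of $F$ separately.
\begin{itemize}
	\item The term $\frac{RT\,q^2}{M(\eta)}\ln|Z/p|$ is continuous in $(\eta,q,p)$ for $p>0$ and $Z>0$. This uses that $1/M(\eta)$ is affine in $\eta$, together with the regularity of $Z$ in $\eta$ (\cref{assumption 2}) and in $p$ (\cref{assumption}).
	\item The integral term requires fixing the integration constant consistently, e.g. $\int_{p_0}^{p} s/Z(\eta,s)\dd s$. Continuity in $\eta$ then follows from continuity of $Z$ in $\eta$, which differentiability provides, plus dominated convergence on the compact interval between $p_0$ and $p$, where $Z$ is bounded away from zero.
	\item The partial derivative $\partial_pF=\bigl(p^2-\frac{RT}{M(\eta)}q^2[Z-p\,\partial_pZ]\bigr)/(pZ)$ is continuous provided $\partial_pZ$ is jointly continuous.
\end{itemize}
The IFT then yields a continuous (indeed $C^1$) local solution $p=g(\eta,q,y)$, and it coincides with the inverse from the first step by uniqueness of the monotone inverse.

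The main obstacle is this joint regularity. The stated assumptions only give separate differentiability in $p$ and in $\eta$, not joint continuity of $Z$ and $\partial_pZ$, and that is too weak for the classical IFT. I would first try to avoid needing $\partial_p Z$ jointly continuous by using a topological version of the implicit function theorem. Such a version only requires $\Phi$ to be jointly continuous and strictly monotone in $p$ for each fixed $(\eta,q,y)$. Continuity of $g$ then follows by an elementary sandwich argument. Given $\varepsilon>0$, choose $p_0\pm\varepsilon$ in the domain. Then $F(\eta_0,q_0,p_0-\varepsilon)<y_0<F(\eta_0,q_0,p_0+\varepsilon)$, and these strict inequalities persist for nearby $(\eta,q,y)$ by joint continuity of $F$. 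The intermediate value theorem together with monotonicity then forces $|g(\eta,q,y)-p_0|<\varepsilon$.

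This sandwich argument needs only joint continuity of $F$, which I would justify by assuming, as the examples in \cref{remark: choice Z} satisfy, that $Z$ is jointly continuous and positive on $D_F$. A further point to verify is that $D_F$ is open in its $p$-slices, so that the perturbed points $p_0\pm\varepsilon$ stay in the domain. This is where the strict subsonic inequality and $Z>0$, both open conditions, are used.
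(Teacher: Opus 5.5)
Your argument is correct in substance and rests on the same key fact as the paper, $\partial_pF>0$ from \cref{lemma: F}, but it follows a different and more careful route.

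The paper's proof is a one-line appeal to the Implicit Function Theorem for $f(\eta,q,p,y)=F(\eta,q,p)-y$ and checks only $\partial_pF\neq 0$. It never verifies the theorem's regularity hypotheses, namely joint continuity of $F$ and $\partial_pF$ on an open domain. As you rightly note, these do not follow from the separate differentiability in \cref{assumption} and~\labelcref{assumption 2}. The paper also ignores that the indefinite integral in $F$ carries an integration constant that may depend on $\eta$. That constant cancels in the later composite $g(\eta,q,F(\eta,q,p)+S(\eta,q))$ of \cref{lemma: pressure continuous}, but it matters for the continuity of $g$ itself.

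You instead get bijectivity in $p$ from strict monotonicity on each connected component of the $p$-slice. This gives an inverse on the whole component rather than only near one point, which is what the later arguments actually use. In \cref{lemma: p}, $g$ is needed along the entire pipe, not just near $x=0$. In \cref{lemma: beta}, the pressures at the two ends of a pipe are compared through monotonicity. For continuity in $(\eta,q,y)$, your sandwich argument needs only joint continuity of $F$ plus monotonicity in $p$. The price is an explicitly stated extra hypothesis (joint continuity and positivity of $Z$). This is weaker than what the paper's appeal to the classical Implicit Function Theorem tacitly requires, namely joint continuity of $\partial_pZ$.

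Three details need tightening.
\begin{enumerate}[label=(\roman*)]
\item The sandwich step needs a tube $U\times[p_0-\varepsilon,p_0+\varepsilon]\subseteq D_F$, with $U$ a neighbourhood of $(\eta_0,q_0)$. Only then is $F(\eta,q,\cdot)$ strictly increasing on the whole segment for every nearby $(\eta,q)$, so that the root produced by the intermediate value theorem is unique and equals $g(\eta,q,y)$. Openness of the individual $p$-slices is not enough; you need $D_F$ relatively open in all three variables.
\item The subsonic inequality contains $\partial_pZ$, so it is an open condition only if $\partial_pZ$ is (jointly) continuous, which is precisely what you wanted to avoid. The simplest remedy is to take $D_F$ open by hypothesis. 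The formulation $D_F\subseteq\ldots$ in \cref{lemma: F} allows this, and the paper's own argument needs it as well.
\item Your first bullet claims joint continuity of $\frac{RT\,q^2}{M(\eta)}\ln|Z/p|$ from \cref{assumption} and~\labelcref{assumption 2}. Separate regularity in $p$ and in $\eta$ does not give joint continuity. So that bullet, and the uniform lower bound on $Z$ in your dominated-convergence step, hold only under the joint-continuity assumption you introduce at the end.
\end{enumerate}
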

\begin{proof}
	The claim follows immediately from the Implicit Function Theorem applied to the function \( f(\eta, q, p, y) = F(\eta, q, p) - y\) since \( \partial_p f(\eta, q, p, y) = \partial_p F(\eta, q, p) \) and according to \cref{lemma: F}, we have \( \partial_p F(\eta, q, p) \neq 0.\)
\end{proof}

In summary, we recover the pressure profile \( p(x)\) from \cref{eq: potential formulation} by applying the inverse \( p \mapsto g(\eta, q, p)\). Then, the pressure profile \(p(x)\) on each pipe \( e \) has the following form:

\begin{lemma}\label{lemma: p}
	Let \( q \in \R\) be the gas flow and \( \eta \in [0,1]\) the mass fraction. Then the solution to the pressure ODE~\labelcref{eq: pressure ODE} reads
	\begin{align*}
		p(x) & = g\left( \eta, q, F(\eta, q, p(0)) 
		- \frac{\lambda_\fr}{2 D} \frac{R \, T}{M(\eta)} q \vert q \vert \, x \right).
	\end{align*}
\end{lemma}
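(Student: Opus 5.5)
The plan is to combine the integrated form \cref{eq: potential formulation} with the inverse $g$ from \cref{lemma: inverse of F}. Since the steady-state mass and hydrogen balances force $q$ and $\eta$ to be constant along the pipe, \cref{eq: pressure ODE} is equivalent to the reduced ODE \cref{eq: pressure ODE reduced}. The reduction uses the pressure law \cref{eq: pressure law}, the chain rule (justified by \cref{assumption}), and division by $Z(\eta,p)>0$. The reduced ODE reads $\partial_p F(\eta,q,p(x))\,\partial_x p(x) = -\frac{\lambda_\fr}{2D}\frac{R\,T}{M(\eta)} q\vert q\vert$.

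First I use the chain rule to recognise the left side as $\frac{\mathrm{d}}{\mathrm{d}x} F(\eta,q,p(x))$. This is valid because $F$ is differentiable in $p$ by \cref{lemma: F}, and $p$ is a differentiable solution that stays in $D_F$, i.e.\ is subsonic with $Z>0$. The right side is constant in $x$. Integrating from $0$ to $x$ then gives \cref{eq: potential formulation}:
\begin{equation*}
F(\eta,q,p(x)) = F(\eta,q,p(0)) - \frac{\lambda_\fr}{2D}\frac{R\,T}{M(\eta)} q\vert q\vert\, x, \qquad x\in[0,L].
\end{equation*}

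Next I set $y(x)$ equal to the right-hand side and apply \cref{lemma: inverse of F}. The equivalence $F(\eta,q,p)=y \Leftrightarrow p=g(\eta,q,y)$ then yields $p(x)=g(\eta,q,y(x))$, which is the claimed formula.

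The main obstacle is that \cref{lemma: inverse of F} gives only a local inverse, whereas the formula must hold on all of $[0,L]$ with one branch $g$. I would first upgrade local to global invertibility along the trajectory. For fixed $(\eta,q)$, \cref{lemma: F} shows $p\mapsto F(\eta,q,p)$ is strictly increasing on each $p$-interval contained in $D_F$. Such a map is injective there, so the inverse on its image is unique and continuous. The solution $p(x)$ is continuous and stays in $D_F$, so it stays in a single connected $p$-interval. Hence $g$ is well defined on the interval traced by $y(x)$, and the identity holds for every $x$. Conversely, for existence, I would check that the function defined through $g$ is differentiable, via the implicit function theorem with $\partial_p F\neq 0$, and that it satisfies the reduced ODE. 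This holds provided $y(x)$ stays in the range of $F$ on that branch, which I would take as the standing assumption that the flow remains subsonic on the whole pipe.
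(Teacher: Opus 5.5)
Your proposal is correct and follows the paper's route. The paper has no separate proof: it integrates the reduced ODE \labelcref{eq: pressure ODE reduced} to obtain \cref{eq: potential formulation} and then inverts with $g$ from \cref{lemma: inverse of F}, exactly as you do. Your extra step also closes a point the paper leaves implicit: the continuous trajectory $p([0,L])$ stays in one connected $p$-interval of $D_F$, where $F(\eta,q,\cdot)$ is strictly increasing and hence invertible. This justifies using a single branch $g$ on all of $[0,L]$ rather than only the local inverse from the Implicit Function Theorem.
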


\begin{remark}
    For friction-dominated pressure loss, the nonlinear term \( \partial_x \left(q^2 / \rho \right)\) can be neglected, resulting in the semi-linear isothermal Euler equations (see, e.g., \cite{Gas-Modell:DomschkeHillerLangTischendorf2017, KochEtAl2015}. Solving the pressure loss for steady states of the semi-linear equations is analogous to the derivation above, but the function \(F\) is replaced by 
    \begin{equation*}
		F_{\mathrm{semi}}(\eta, q, p) = \int \frac{p}{Z(\eta, p)} \dd p.
	\end{equation*}
	The domain \( D_F\) in \cref{lemma: F} satisfies \(Z(\eta, p) > 0\). Thus, the function \(	F_{\mathrm{semi}}\) is also strictly monotonically increasing with respect to \(p\) and the results of \cref{lemma: F} and \cref{lemma: p} also apply for the semi-linear isothermal Euler equations. 
\end{remark}

\subsection{Gas Flow across Junctions}

Gas flow across nodes is governed by \emph{coupling conditions}, see, e.g., \cite{BandaHertyKlar2006,Gas-Modell:DomschkeHillerLangTischendorf2017,KochEtAl2015}. For gas mixtures, these consists of the following:
\begin{enumerate}[label=(\roman*)]
	\item The mass of the mixture as well as the individual constituents is conserved.
	\item The gas mixes perfectly and instantaneously, i.e., one obtains a homogeneous mixture without time delay.
	\item The pressure is continuous across a node.
\end{enumerate}
Mass conservation and pressure continuity also appear in single gas flows, while the perfect mixing condition is unique for the flow of gas mixtures. 


\medbreak
\textbf{Mass Balance.} Let $b_v \in \mathbb{R}$ be the \emph{load} of node $v \in \mathcal{V}$, i.e., the amount of gas entering or leaving the network. We have
\begin{align*}
	b_v & < 0 \quad \Leftrightarrow \quad  v \in \V_{< 0}, \text{ which means } v \text{ is a supply node,}  \\
	b_v & \ge 0 \quad \Leftrightarrow \quad v \in \V_{\ge 0}, \text{ which means } v \text{ is a demand node.}
\end{align*}
Then, the mass balance across node $v \in \mathcal{V}$ refers to the fact that the sum of all incoming flows and all outgoing flows is equal to the load. Using the incidence matrix \( A \in \R^{\vert\V\vert \times \vert \E\vert}\), cf. \cref{eq: incidence matrix}, the mass balance for the network is given by the linear system
\begin{equation*}
	\sum_{e \in \E} a(v,e) q_e = b_v \quad \text{for all } v \in \V \quad \Leftrightarrow \quad A \bm{q}=  \bm{b},
\end{equation*}
where \(\bm{b} \in \R^{\vert \V\vert}\) is the load vector with entries \(b_v\), and \(\bm{q} \in \R^{\vert \E\vert}\) the flow vector with entries \(q_e\). 

\medbreak
\textbf{Perfect Mixing and Mass Balance of the Constituents.} Perfect and instantaneous mixing means that incoming gas immediately forms a homogeneous mixture at the node before flowing into the outgoing pipes. As a result, the composition of the outgoing gas is identical across all pipes, which automatically ensures the mass balance of the individual constituents. Since the flow direction and the edge orientation can differ, it is unclear a priori which pipes carry incoming and which carry outgoing flow. However, a pipe \( e\) supplies gas to the node if \(a(v,e) q_e \ge  0\), and it transports gas away from the node if \(a(v,e) q_e < 0\). For each supply node \(v \in \mathcal{V}_{<0}\), let \( \zeta_v \in [0,1]\) be mass fraction of the first constituent of the supply node. Then, the nodal mass fraction \( \eta_v\) becomes
\begin{equation*}
	\eta_v = \frac{\sum_{e \in \E(v)} \eta_e \left( a(v,e) q_e\right)^+ + \zeta_v b_v^-}{\sum_{e \in \E(v)} \left(a(v,e)q_e\right)^+ + b_v^-},
\end{equation*}
where for a scalar \( \alpha \in\R\), we define \( \alpha^+ = \max\{\alpha, 0\}\) and \( \alpha^- = \max\{-\alpha, 0\}\). 
Additionally, we have the following relation between the nodal mass fraction \( \eta_v\) and the mass fraction \( \eta_e\) along a pipe:
\begin{equation}\label{eq: compostion on edge}
	\eta_e = \begin{cases}
		\eta_{f(e)}, & \text{if } q_e \ge 0, \\
		\eta_{h(e)}, & \text{if } q_e < 0,
	\end{cases}
\end{equation}
which encodes that the mass fraction along a pipe is always given by the upstream nodal composition. \\

\textbf{Pressure Continuity.} Pressure continuity ensures that all pipes connected to a node \( v\) have the same pressure at the shared endpoint. To enforce this, we introduce the nodal pressure \( p_v\), leading to the condition:
\begin{equation*}
	p_e(x_e) = p_v \quad \text{for all} \quad e \in \E(v) \quad \text{where} \quad 
	x_e = \begin{cases}
		0, & e \in \E_-(v), \\
		L_e, & e \in \E_+(v).
	\end{cases}
\end{equation*}
Together with \cref{eq: potential formulation}, which describes the pressure loss between along a pipe \(e\), we obtain the pressure loss between two connected, adjacent nodes:
\begin{equation*}
	F(\eta_e, q_e, p_{h(e)}) - F(\eta_e, q_e, p_{f(e)}) = -\frac{\lambda_\fr}{2 D} \frac{R \, T}{M(\eta_e)} L_e q_e \vert q_e \vert.
\end{equation*} 

We recall that the specific expression for \( F\) depends on the compressibility factor \( Z\), cf. \cref{eq: F}.

\subsection{Operation of Compressors}
Compressors play a key role in the operation of gas networks as they can increase the pressure locally.
Since pipeline networks need to operate within a certain pressure range to ensure safe operation, and since the transport of gas alone causes a drop in pressure due to friction, compressors are needed to counteract the pressure loss and to maintain a safe pressure level.

Each compressor \( e \in \E_\C\) is equipped with a so-called \emph{boosting} or \emph{compressor ratio} \( \gamma_e \ge 1\) and we assume that the compressor ratio is independent of the gas composition, cf. \cite{GugatSchuster2018,Kazi2024}.
The operation of a compressor is governed by the pressure boost equation:
\begin{equation}
	\label{eq: compressor control}
	p_{h(e)} = \gamma_e \, p_{f(e)} \quad \text{where} \quad e = (f(e), h(e)).
\end{equation}
Compressor stations are designed to increase the pressure only in one direction, gas flow in the opposite direction is not possible, cf. \cite{Gas-Modell:DomschkeHillerLangTischendorf2017,KochEtAl2015}. Thus, we make the following assumption:

\begin{assumption}\label{assumption2}
	For every edge $e \in \E_\C$, the gas flows from node \(f(e)\) to node \( h(e)\), i.e., we have \(q_e \geq 0\) for all $e \in \E_\C$.
\end{assumption}

Thus, the composition of the gas in the compressor is given by the upstream composition \( \eta_e = \eta_{f(e)}\).
Furthermore, we prohibit compressors from forming pathological structures such as cycles or running parallel to other edges to prevent circular gas flow in a cycle.

\subsection{Full Gas Flow Model}
Summarizing, the steady state gas flow model for mixtures includes the pressure loss along pipes together with the pressure continuity across nodes, the compressor property, the mass balance and the mixing condition. This is given by
\begin{subequations} \label{eq: full model}
	\begin{align}
		F(\eta_e, q_e, p_{h(e)}) - F(\eta_e, q_e, p_{f(e)}) 
		= -\frac{\lambda_\fr}{2 D} \frac{R \, T}{M(\eta_e)} L_e q_e \vert q_e \vert 
		& \quad \text{for all } e \in \E_\pipe, 
		\vphantom{\frac{\sum_{e \in \E(v)} \eta_e \left( a(v,e) q_e\right)^+ + \zeta_v b_v^-}{\sum_{e \in \E(v)} \left(a(v,e)q_e\right)^+ + b_v^-}}  
		\label{eq: model pressure loss}\\
		p_{h(e)}  = \gamma_e p_{f(e)} 
		& \quad \text{for all } e \in \E_\C,  
		\vphantom{\frac{\sum_{e \in \E(v)} \eta_e \left( a(v,e) q_e\right)^+ + \zeta_v b_v^-}{\sum_{e \in \E(v)} \left(a(v,e)q_e\right)^+ + b_v^-}}
		\label{eq: model compressor}\\
		\sum_{e \in \E_+(v)} q_e - \sum_{e \in \E_-(v)} q_e = b_v
		& \quad \text{for all } v \in \V,
		\vphantom{\frac{\sum_{e \in \E(v)} \eta_e \left( a(v,e) q_e\right)^+ + \zeta_v b_v^-}{\sum_{e \in \E(v)} \left(a(v,e)q_e\right)^+ + b_v^-}} 
		\label{eq: model mass balance}\\
		\eta_v  = \frac{\sum_{e \in \E(v)} \eta_e \left( a(v,e) q_e\right)^+ + \zeta_v b_v^-}
		{\sum_{e \in \E(v)} \left(a(v,e)q_e\right)^+ + b_v^-} 
		& \quad \text{for all } v \in \V.
		\label{eq: model mixing}
	\end{align}
\end{subequations}
In comparison to the setting in \cite{Ulke2025}, the gas flow model~\labelcref{eq: full model} also accommodates compressor stations and non-constant compressibility factors. 
The major difference is \cref{eq: model pressure loss} as the function \(F\) is no longer explicitly available for arbitrary compressiblity factors. 
For ideal gas, where \(Z=1\) holds, we have \(F(\eta, q,p) = \frac{1}{2} p^2\), which recovers \cite[Equation 2.15]{Ulke2025}.

\section{Existence of Steady-State Solutions}
We show the existence of solutions to the model~\labelcref{eq: full model} for networks with up to one cycle by extending the approach presented in \cite{Gugat2018} for the gas flow of one constituent using the quasi-linear model and assuming real gas, and in \cite{Ulke2025} for gas mixtures made up of two constituents using the semi-linear model and assuming ideal gas.
Compared to the setting in \cite{Gugat2018,Ulke2025}, the novelty lies in generalizing the existence result to arbitrary compressibility factors \( Z\) instead of assuming a constant or linear compressibility factor.
This generalization comes with a major difficulty: for non-constant compressibility factors, the pressure change along a path connecting two nodes lacks a closed-form expression in general. 
We will indicate in detail where and how this lack of an explicit representation introduces additional challenges in the proof compared to ideal gas.

\subsection{Tree-Shaped Networks}
The proof for tree-shaped networks is straightforward since the model equations decouple and we can solve the model equations subsequently: first for the flow \(q_e\), then for the mass fraction \( \eta_v\), and finally for the pressure \( p_v\).

\begin{theorem}[Existence of Solution for Trees]\label{thm: existence tree}
	Let \( \G = (\V, \E) \) be a tree-shaped network and let \(v^{\ast} \in \V\) be an arbitrary yet fixed node. Further, provide 
	\begin{enumerate}[label=(\roman*)]
		\item the load \( b_v\) for all \( v \in \V\) satisfying \( \sum_{v \in\V} b_v = 0\),
		\item the supply composition \( \zeta_v \in [0,1]\) for all \( v\in \V\) with \( b_v \le 0\), 
		\item the compressor ratio \( \gamma_e \ge 1 \) for all \( e \in \E_\C\), and
		\item the nodal pressure \( p_{v}\) for \( v = v^{\ast}\).
	\end{enumerate}
	Further, assume that the compressibility factor \(Z\) satisfies \cref{assumption} and~\labelcref{assumption 2} and that the gas flow is subsonic.
	Then the steady-state mixture model admits a unique solution. 
\end{theorem}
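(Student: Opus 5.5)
The proof proceeds in three decoupled stages, following the structure announced before the theorem: first the flows $q_e$, then the nodal compositions $\eta_v$ (and hence $\eta_e$), and finally the nodal pressures $p_v$.

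\textbf{Flows.} Since $\G$ is a tree, $|\E| = |\V|-1$ and the incidence matrix $A$ has rank $|\V|-1$; its kernel is trivial as a map on $\R^{|\E|}$ because a tree contains no cycles. The range of $A$ is the orthogonal complement of $\mathbf{1}$, and $\sum_v b_v = 0$ places $\bm b$ in that range. Hence $A\bm q = \bm b$ has a unique solution. Constructively, I would remove a leaf $v$: the single incident edge must carry flow $\pm b_v$. I then add $b_v$ to the neighbour's load and recurse. This simultaneously fixes the flow direction on every edge. For compressor edges, \cref{assumption2} requires $q_e \ge 0$, and I would take this as a consistency assumption on the data; I would remark that it is implicitly part of the hypotheses.

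\textbf{Compositions.} With the flow directions known, orient every edge along its flow; this orientation carries no directed cycles because the underlying graph is a tree. Process the nodes in a topological order of this acyclic digraph. At each node $v$, all incoming edges come from earlier nodes, so their $\eta_e = \eta_{f}$ (via \eqref{eq: compostion on edge}) are already known. \Cref{eq: model mixing} then determines $\eta_v$ explicitly. The denominator is positive unless $v$ has no inflow and no supply; in that case there is also no outflow, and $\eta_v$ is irrelevant (any fixed choice works, e.g. $0$). Uniqueness follows because each $\eta_v$ is forced by its predecessors.

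\textbf{Pressures.} Root the tree at $v^\ast$ with the given $p_{v^\ast}$ and traverse outward (BFS). For an edge $e$ with one endpoint's pressure known, there are two cases.
\begin{itemize}
\item \emph{Compressor.} Use $p_{h(e)} = \gamma_e p_{f(e)}$, or $p_{f(e)} = p_{h(e)}/\gamma_e$, which is unique since $\gamma_e \ge 1 > 0$.
\item \emph{Pipe.} \eqref{eq: model pressure loss} reads $F(\eta_e,q_e,p_{\text{unknown}}) = F(\eta_e,q_e,p_{\text{known}}) \mp \tfrac{\lambda_\fr}{2D}\tfrac{RT}{M(\eta_e)}L_e q_e|q_e|$. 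By \cref{lemma: F}, $F$ is strictly increasing in $p$ on $D_F$, so there is at most one solution. By \cref{lemma: inverse of F} and \cref{lemma: p}, it is $p = g(\eta_e, q_e, \cdot)$ applied to the right-hand side.
\end{itemize}
Since every node is reached along exactly one path from $v^\ast$, no consistency conditions arise, and the resulting pressures are unique.

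\textbf{Main obstacle.} The delicate step is the pipe case. \Cref{lemma: inverse of F} only gives a local inverse, so existence of a preimage requires that the target value lie in the range of $F(\eta_e,q_e,\cdot)$ on the subsonic domain. For trees I would handle this by invoking the standing assumption that the flow is subsonic, i.e. that the profile stays in $D_F$ along the whole pipe. On this connected $p$-interval $F$ is a strictly monotone continuous bijection onto its image, so $g$ is globally defined there, and the ODE solution of \cref{lemma: p} exists on $[0,L_e]$. Uniqueness then uses only the strict monotonicity from \cref{lemma: F}, which holds globally on the interval, not merely locally.
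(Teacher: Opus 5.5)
Your proposal is correct and follows the paper's route. The paper likewise solves the decoupled tree problem in sequence: first the flows, then the compositions (deferring both steps to \cite[Theorem 3.5]{Ulke2025}), and finally the pressures edge by edge from $v^\ast$, using the monotonicity and local invertibility of $p \mapsto F(\eta_e, q_e, p)$ from \cref{lemma: F} and \cref{lemma: inverse of F}. Two of your remarks make explicit what the paper leaves tacit: that $q_e \ge 0$ on compressor edges is an implicit consistency condition on the loads, and that existence in the pipe step needs the subsonic hypothesis to place the target value in the range of $F(\eta_e, q_e, \cdot)$.
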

\begin{proof}
	The proof for the existence of unique solutions for tree-shaped networks is analogous to the proof of \cite[Theorem 3.5]{Ulke2025}. However, we must additionally exploit that the function \( p \mapsto F(\eta, q, p)\) is locally bijective (\cref{lemma: F}) for fixed \( \eta \in [0,1]\) when solving the pressure loss equation~\labelcref{eq: model pressure loss}.
\end{proof}

\subsection{Networks With One Cycle}

The presence of a cycle in a network makes proving the existence of steady states more intricate. In the following, we state the main theorem and provide the auxiliary results necessary to prove it.

\begin{theorem}[Existence of Solution for Networks with one Cycle]\label{thm: existence cycle}
	Let \( \G = (\V, \E) \) be a network with one cycle and let \(v^{\ast} \in \V\) be an arbitrary yet fixed node. Further, provide 
	\begin{enumerate}[label=(\roman*)]
		\item the load \( b_v\) for all \( v \in \V\) satisfying \( \sum_{v \in\V} b_v = 0\),
		\item the supply composition \( \zeta_v \in [0,1]\) for all \( v\in \V\) with \( b_v \le 0\), 
		\item the compressor ratios \( \gamma_e \ge 1\) for all \( e \in \E_\C\), and
		\item the nodal pressure \( p_{v}\) for \( v = v^{\ast}\).
	\end{enumerate}
	Furthermore, assume that the compressibility factor \(Z\) satisfies \cref{assumption} and~\labelcref{assumption 2} and that the gas flow is subsonic.
	Then the steady-state mixture model admits at least one solution. 
\end{theorem}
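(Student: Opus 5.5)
We parametrize all possible flow distributions by a single scalar, namely the flow $s \in \R$ on one fixed pipe $e_0$ of the cycle, and then look for a value of $s$ at which the pressure is consistent around the cycle. The parameter has to sit on a pipe: compressors may not form cycles or run parallel to other edges, so the cycle contains at least one pipe. Removing $e_0$ leaves a spanning tree $\G_0$.

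\textbf{Step 1: flows and compositions.} For fixed $s$, we treat $e_0$ as an extra load $-s$ at $f(e_0)$ and $+s$ at $h(e_0)$. The mass balance \eqref{eq: model mass balance} on $\G_0$ then has a unique solution $\bm q(s)$. As in the tree case, every edge flow is affine in $s$. On the cycle edges it has the form $q_e(s) = \pm s + c_e$, and off the cycle it is independent of $s$. By \cref{assumption2}, $s$ is restricted to an interval $I$ on which all compressor flows are nonnegative. For fixed $s$ the flow directions are known, so the mixing equations \eqref{eq: model mixing} form a linear system whose upstream structure is acyclic. The reason is that a steady flow without circulation cannot loop around the whole cycle. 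This system has a unique solution $\eta_v(s)$, $\eta_e(s)$, exactly as in \cite{Ulke2025}. The map $s\mapsto \eta(s)$ is piecewise continuous, with jumps only at values of $s$ where some cycle edge flow vanishes, since there $\eta_e$ switches between the two endpoint compositions.

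\textbf{Step 2: pressures and the cycle residual.} Starting from $p_{v^\ast}$, we propagate pressures through $\G_0$ as in \cref{thm: existence tree}. On each pipe we use $g$ from \cref{lemma: inverse of F} via \cref{lemma: p}, and on each compressor we use \eqref{eq: model compressor}. Then we define the residual
\[
\Phi(s) = F\bigl(\eta_{e_0}(s), s, p_{h(e_0)}(s)\bigr) - F\bigl(\eta_{e_0}(s), s, p_{f(e_0)}(s)\bigr) + \frac{\lambda_\fr}{2D}\frac{RT}{M(\eta_{e_0}(s))}L_{e_0}\, s|s|.
\]
A zero of $\Phi$ gives a solution of \eqref{eq: full model}. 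It is more convenient to walk around the cycle from $f(e_0)$ to $h(e_0)$ and express everything through the ``pressure change along the cycle'' as a function of $s$. The goal is to show that this function is continuous and changes sign on $I$; the intermediate value theorem then yields $s^\ast$ with $\Phi(s^\ast)=0$.

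\textbf{Step 3 (main obstacle): continuity despite composition jumps.} Away from values where some $q_e(s)=0$, continuity follows from the continuity of $g$ (\cref{lemma: inverse of F}), the differentiability of $F$ (\cref{lemma: F}), and \cref{assumption}, \labelcref{assumption 2}. At a critical value $s_0$ with $q_e(s_0)=0$, the composition $\eta_e$ jumps. However, the pressure loss across that pipe satisfies $F(\eta,0,p_h)=F(\eta,0,p_f)$. Since $F$ is strictly monotone, this forces $p_h=p_f$ regardless of $\eta$. So the pressure change across $e$ is continuous in $s$ even though its composition is not. One has to check that the one-sided limits of $g(\eta^\pm, q_e(s), F(\eta^\pm,q_e(s),p) - c\,q_e|q_e|)$ both tend to $p$. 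This holds because each branch is jointly continuous in $(q,p)$ for fixed $\eta$, and at $q=0$ each branch reduces to the identity. The nodal compositions downstream also jump, but they only enter pipes whose flow is nonzero near $s_0$. For those pipes I expect the one-sided limits to agree, because the jumping pipe carries vanishing flow and hence contributes vanishing mass to the mixing formula. Verifying this limit carefully, including for compressor edges and edges leaving the cycle, without a closed form for $F$, is the crux.

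For the sign change, we take $s$ to the ends of $I$. As $|s|$ grows, the friction term $s|s|$ dominates, while monotonicity of $F$ in $p$ bounds the remaining terms. If $I$ is bounded because of compressors, then at the endpoint the compressor flow is zero and a compressor-side comparison gives the sign instead. Throughout we stay inside $D_F$, using the subsonic assumption.
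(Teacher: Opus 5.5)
\textbf{The gap is in the sign-change step.} Under the paper's standing assumption, no compressor lies on the cycle; the proof of \cref{lemma: beta} relies on this. Hence every compressor flow is independent of $s$, and your interval $I$ is all of $\R$. The ``compressor-side comparison'' you mention for a bounded $I$ is never made precise, and with a compressor on the loop pressure need not decrease around it anyway.

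You therefore send $|s|\to\infty$, but there the residual is not even defined. The node where the cycle is attached to $v^\ast$ has an $s$-independent pressure $p_a$. The cycle flows $q_e(s)=\pm s+c_e$ at that node become unbounded, so $(\eta_e,q_e,p_a)$ leaves the subsonic set $p^2>\frac{RT}{M}q^2[Z-p\,\partial_pZ]$ and hence $D_F$. Relatedly, $F(\eta,q,\cdot)$ is bounded below on the subsonic range; for $Z\equiv 1$ its minimum is attained at the sonic pressure. So $g\bigl(\eta,q,F(\eta,q,p)+S(\eta,q)\bigr)$ ceases to exist once the friction drop $|S|\propto q^2$ is too large. ``The friction term dominates'' cannot be invoked either, since the $F$-difference also carries a factor $q^2$. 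Moreover, for such $s$ all cycle edges carry flow in the same circular direction, so the mixing equations lose the acyclic upstream structure you claim in Step~1.

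The missing idea is to work on the bounded interval $[s^-,s^+]$ whose endpoints are the smallest and largest zeros of the affine cycle flows. This includes the zero $s=0$ of $e_0$, so $s^-\le 0\le s^+$. On this interval there is no circulation, so compositions are well defined. At $s^+$ (resp.\ $s^-$) every cycle pipe carries flow in one common circular direction or none. On a pipe, $\sign\bigl(F(\eta_e,q_e,p_{h(e)})-F(\eta_e,q_e,p_{f(e)})\bigr)=-\sign(q_e)$, and $F$ is strictly increasing in $p$, so pressure weakly decreases along the flow. This gives opposite signs of the pressure mismatch at $s^-$ and $s^+$. That is exactly the paper's \cref{lemma: beta}; no asymptotics are needed.

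With that repair, your route is a genuine and in one respect leaner alternative to the paper's. The paper cuts $e^c$ and adds the composition parameter $\mu$ with root curve $\mu_\eta(\lambda)$. It only obtains right-sided continuity of $\H_p$ at $\lambda=0$, and therefore needs \cref{lemma: positive partial sums} and \cref{lemma: beta positiv} to select a cut edge with $I_\sol=[0,\lambda^+]$.

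Your observation makes that detour unnecessary: a pipe with vanishing flow passes the pressure through unchanged, whatever its composition. It is easy to make rigorous. The map $(\eta,q,p)\mapsto g\bigl(\eta,q,F(\eta,q,p)+S(\eta,q)\bigr)$ is jointly continuous by \cref{lemma: inverse of F} and equals $p$ at $q=0$ for every $\eta$. By compactness of $[0,1]$, it tends to $p_0$ as $(q,p)\to(0,p_0)$ uniformly in $\eta$, so no one-sided limits of compositions are required. Furthermore, a nodal composition can only jump at a node with vanishing throughput, and then all its incident flows vanish too. The same argument covers the sign changes of $q^c_e$ at $\lambda=\beta_e>0$, where $\eta^c_e$ can jump because the upstream node switches; the paper's proof of \cref{lemma: pressure continuous} passes over this point.

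Finally, apply the intermediate value theorem to the pressure mismatch rather than to $\Phi$ itself. Since $F(\eta,0,\cdot)$ depends on $\eta$, $\Phi$ may jump at $s=0$, although without changing sign.
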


	%
	%

The idea to prove \cref{thm: existence cycle} is to cut an edge \(e^c \) of the cycle to obtain a tree-shaped network that has the same structure as the original, but without the cut edge \(e^c \) and with two additional nodes, \( v_\cl\) and \( v_\crr\) respectively, and two additional edges \( e_\cl = (f(e^c), v_\cl)\) and \( e_\crr = (v_\crr, h(e^c))\) generated by the cut.
The resulting network is the so-called \emph{cut network} \( \G^c \) of \(\G\) with respect to \( e^c\), cf. \cite{Gugat2018, WintergerstDiss}, and is defined by
\begin{align*}
	\G^c = (\V^c, \E^c) \quad \text{with} \quad 
        \V^c = \V \cup \{ v_\cl, v_\crr\},
	   \quad
	   \E^c = \left(\E \setminus \{ e^c\} \right) \cup \{ e_\cl, e_\crr\}.
\end{align*}
%
%

The goal is to find solutions on the cut graph that are also solutions on the original graph.
On the cut graph, such solutions have constant flow, mass fraction, and pressure through the cut.
To extract these solutions, we must determine suitable boundary data -- consisting of the load and supply composition -- for the new nodes generated by the cut. Throughout this paper, we indicate variables that are associated with the cut graph using the superscript \(c\).

As the relation \( b_{v_\cl}^c= - b_{v_\crr}^c\) enforces a constant flow through the cut automatically, we introduce the parameter \( \lambda\) and set \( b_{v_\cl}^c = \lambda\) and \( b_{v_\crr}^c = - \lambda\). Additionally, we introduce the parameter \( \mu\) to model the supply composition either of node \(v_{\cl}\) or node \( v_{\crr}\) depending on the sign of \( \lambda\).
Then, the parameter-dependent boundary data on the cut network reads:
\begin{align}\label{eq: cut network boundary data}
	p_{v^{\ast}}^c = p^{\ast},  \quad
	b^c_v = \begin{cases}
		b_v, & v \in \V, \\
		-\lambda, & v = v_\crr, \\
		\lambda, & v = v_\cl,
	\end{cases}  \quad \text{and} \quad 
	\zeta_v^c = \begin{cases}
		\zeta_v, & v \in \V_{<0}, \\
		\mu, & v = v_\crr \text{ and } \lambda \ge 0, \\
		\mu, & v = v_\cl \text{ and } \lambda < 0.
	\end{cases}
\end{align}

The remaining two conditions -- constant mass fraction and constant pressure through the cut -- form a non-linear system that need to be solved for the parameters \( \lambda\) and \( \mu\):
\begin{subequations}\label{eq: Hp and Heta}
	\begin{align}
		\H_{\eta}(\lambda, \mu) & \coloneqq \eta_{v_\crr}^c(\lambda, \mu)  - \eta_{v_\cl}^c(\lambda, \mu) = 0 \\ 
		\H_p(\lambda, \mu) & \coloneqq p_{v_\crr}^c(\lambda, \mu)  - p_{v_\cl}^c(\lambda, \mu) = 0. 
	\end{align}
\end{subequations}
The equation \( \H_{\eta} = 0\) ensures constant mass fraction through the cut, while the equation \( \H_p = 0\) implies that the pressure is continuous though cut. 
Thus, showing that the non-linear system \labelcref{eq: Hp and Heta} admits at least one solution \((\lambda^\ast, \mu^\ast) \in \R \times [0,1]\) is equivalent to showing that there exists at least one solution to the gas flow model \labelcref{eq: full model} for networks with one cycle, cf. \cite[Lemma 3.8]{Ulke2025}.
The proof to show the solvability of the non-linear system \labelcref{eq: Hp and Heta} follows the same fundamental argument developed in \cite{Ulke2025} for the setting of ideal gas together with the semi-linear pressure equation. The core idea is as follows:
\begin{enumerate}[label=(\roman*)]
	\item Fix a cut edge \( e^c \in \E  \). Since the flow on the original network must be acyclic, we can restrict the admissible values for the solution \( \lambda^\ast \) to an interval \( I_\sol \).
	
	\item For each fixed \( \lambda \in I_\sol \), the function \( \H_\eta(\lambda,\mu) \) admits a unique root \( \mu_\eta(\lambda) \).
	
	\item The function \( g(\lambda) = \H_p(\lambda, \mu_\eta(\lambda))\) satisfies the intermediate value theorem on the interval \( I_\sol\), i.e., 
	\begin{enumerate}[label=(\alph*)]
		\item the function \(g\) is continuous on \(I_\sol\), and
		\item it attains opposite signs at the boundary values of \(I_\sol\).
	\end{enumerate}
\end{enumerate}

Although the overall structure of the proof remains the same for our setting, the non-constant compressibility factor \(Z\) introduces significant technical difficulties. 
\cref{fig: overview proof} illustrates the structure of the proof and highlights the subtle yet significant argument that fails for arbitrary compressibility factors.
The main difficulty arises in proving continuity. The idea is to show that the root curve \(\mu_\eta(\lambda)\) and the function \(\H_p(\lambda, \mu)\) are continuous in all arguments, since then \(g\) is continuous as composition of continuous functions.
The composition on the cut graph is independent of the pressure, and hence of the compressibility factor \(Z\), as the cut graph is tree-shaped. Therefore, the derivation of \(\mu_\eta\) is analogous to the case of ideal gas, cf. \cite[Lemma 3.20]{Ulke2025}. For all \(\lambda \in \R\) such that gas does not flow from node \(v_\cl\) to node \(v_\crr\), we obtain the following expression:
\begin{equation*}
	\mu_\eta(\lambda) = \begin{cases}
		\eta_{v_\crr}^c(\lambda), & \lambda < 0, \\
		\eta_{v_\cl}^c(\lambda), & \lambda \ge 0.
	\end{cases}
\end{equation*}

%

Since the root curve \(\mu_\eta\) is given by the composition at either node \(v_\cl\) or node \(v_\crr\), its continuity follows from the continuity of the composition.
The continuity of the composition does not depend on the compressibility factor \(Z\), whereas the continuity of \(\H_p\) does, cf. \cite[Lemma 3.18, Lemma 3.21]{Ulke2025}. 
For ideal gas, where \( Z = 1\), there exists an explicit expression of \( \H_p\) in terms of the flow and the composition. This explicit dependence allows us to derive the continuity of \(\H_p\) from the continuity of the flow and the composition, as provided by \cite[Corollary 3.15, Lemma 3.16, Lemma 3.17]{Ulke2025}.
However, there is one issue: the composition at the nodes generated by the cut is only right-sided continuous. 
Fortunately, the explicit dependence makes it possible to analyze the effect of the one-sided continuity at \( \lambda = 0\) on the continuity of \(\H_p\), revealing that the discontinuity in fact vanishes.

\begin{figure}[ht]
	\centering
	\scalebox{0.72}{
	\begin{tikzpicture}[
		> = {Latex[scale=1.0]}, 
		]
		
		\tikzstyle{dataset} = [rectangle, draw, text width = 2in, rounded corners = 5pt, thick]
		
		\pgfmathsetmacro{\hx}{7}
		\pgfmathsetmacro{\hxx}{4.25}
		\pgfmathsetmacro{\hxxx}{6}
		\pgfmathsetmacro{\hy}{1.75}
		\pgfmathsetmacro{\hyy}{1.75}
		
		\pgfmathsetmacro{\d}{0.3}
		\pgfmathsetmacro{\dx}{1.5}
		\pgfmathsetmacro{\dxx}{0.5}
		\pgfmathsetmacro{\dy}{1.5}
		\pgfmathsetmacro{\dyy}{\dy/2}
		
		\node[above] at (\hx/2, {3*\hy}) {\textbf{Idea of Proof}};
	
		\draw[rounded corners=5pt, thick]
		(0,0) rectangle (\hx, 3*\hy)
		node[rectangle split, rectangle split parts=2, midway, text width=\hx cm, align=left]
		{
		\begin{enumerate}[label=(\roman*)]
			\itemsep = 15pt
			\item Restrict the admissible values for the solution \( \lambda^\ast \) to \( I_\sol \).
			\item For fixed \( \lambda \in I_\sol \), the function \( H_\eta(\lambda,\mu) \) has a unique root \( \mu_\eta(\lambda) \).
			\item Apply the intermediate value theorem to \( g(\lambda) \) on \( I_\sol \).
		\end{enumerate}
		};

		\draw[rounded corners=5pt, thick]
		(\hx + \dx, 0.5*\hy + \dyy) rectangle (\hxx + \hx + \dx, 1.5*\hy + \dyy)
		node[rectangle split, rectangle split parts=2, midway]
		{ \(g\) attains  opposite signs
			\nodepart{second}
			at the boundary of \(I_\sol\).};
		
		\draw[rounded corners=5pt, thick]
		(\hx + \dx, -0.5*\hy - \dyy ) rectangle (\hxx + \hx + \dx, 0.5*\hy - \dyy)
		node[rectangle split, rectangle split parts=2, midway]
		{The function \(g \) 
			\nodepart{second}
			is continuous on \(I_\sol\).};

		\draw[rounded corners=5pt, ultra thick, color=red!80!black]
		(\hxx + \hx + 2*\dx, 0.5*\hy) rectangle (\hxxx + \hxx + \hx + 2*\dx,  1.5*\hy)
		node[rectangle split, rectangle split parts=2, midway]
		{\(\bm{\H_p}\) \textbf{has an explicit expression} 
			\nodepart{second}
			\textbf{in terms of} \(\bm{q_e^c}\) \textbf{and} \(\bm{\eta_v^c}\)};

		\draw[rounded corners=5pt, thick]
		(\hxx + \hx + 2*\dx, -0.5*\hy - \dyy) rectangle (\hxxx + \hxx + \hx + 2*\dx,  0.5*\hy - \dyy)
		node[rectangle split, rectangle split parts=2, midway]
		{The flow \(q_e^c\) \nodepart{second} is continuous.};
		
		\draw[rounded corners=5pt, thick]
		(\hxx + \hx + 2*\dx, -1.5*\hy - 2*\dyy) rectangle (\hxxx + \hxx + \hx + 2*\dx,  -0.5*\hy - 2*\dyy)
		node[rectangle split, rectangle split parts=2, midway]
		{\(\eta_v^c \) is continuous in \(\lambda \neq 0\) and
			\nodepart{second}
			right-sided continuous in \(\lambda = 0\).};
		
		
		\draw[<-, thick] (\hx, 0.66*\hy) -- (\hx + \dx,  \hy + \dyy);
		\draw[<-, thick] (\hx, 0.33*\hy) -- (\hx + \dx, - \dyy);
		
		\draw[<-, ultra thick, red!80!black] (\hxx + \hx + \dx, -0.5*\hy - \dyy + 0.75*\hy) -- (\hxx + \hx + 2*\dx,  \hy)
		node[midway, above, sloped] {\textbf{IDEAL}}
		node[midway, below, sloped] {\textbf{GAS}};
		\draw[<-, thick] (\hxx + \hx + \dx, - \dyy) -- (\hxx + \hx + 2*\dx,  - \dyy);
		\draw[<-, thick] (\hxx + \hx + \dx, -0.5*\hy - \dyy + 0.25*\hy) -- (\hxx + \hx + 2*\dx, -\hy - 2*\dyy);
		
	\end{tikzpicture}
}
	\caption{Key idea to prove existence of steady-states, and step at which the argumentation of \cite[Lemma 3.13]{Ulke2025} for the ideal gas case fails for an  arbitrary compressibility factor \(Z = Z(\eta, p)\) (highlighted in red).}
	\label{fig: overview proof}
\end{figure}
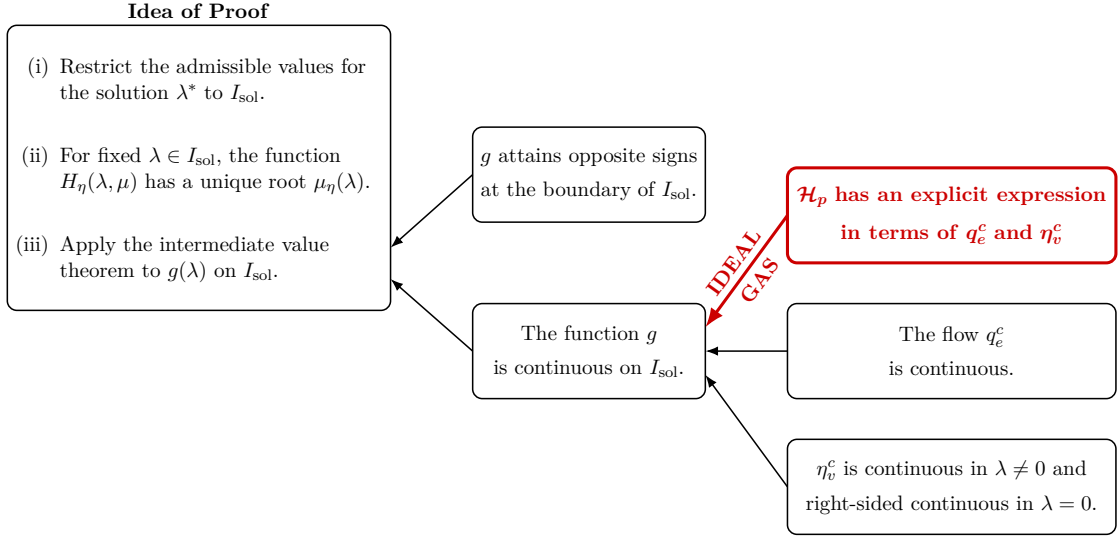

For an arbitrary compressibility factor \(Z\), the explicit dependence of \(\H_p\) on the flow and composition is unknown. Thus, it remains unclear how the discontinuity enters \(\H_p\) and it is not possible to verify that \(\H_p\) is also continuous at \( \lambda = 0\) immediately.
To overcome this difficulty, we proceed with the following steps:

\begin{itemize}
	\item We prove the continuity of \(\H_p\) for \(\lambda \neq 0\) and right-sided continuity at \(\lambda = 0\) using an induction argument.
	\item We show that a cut edge exists such that \( I_\sol \subset [0,\infty) \). This choice allows us to invoke the intermediate value theorem for \(g\) on \(I_\sol\) immediately.
\end{itemize}

\subsection{Continuity of Flow, Mass Fraction, and Pressure}

Besides ensuring continuous pressure across the cut edge, the function \(\H_p\) also describes the pressure change along the path connecting the nodes \(v_\crr\) and \(v_\cl\).
We aim to establish the continuity of \(\H_p\) by an induction based argument utilizing \cref{eq: model pressure loss}. Therefore, we recall the continuity properties of the flow and composition.
As the cut network is tree-shaped, the continuity result \cite[Corollary 3.11]{Ulke2025} for ideal gas applies directly to our setting and provides the continuity of the flow.
For the continuity of the mass fraction, we have:


\begin{lemma}[{Continuity of the Mass Fraction}]\label{lemma: composition continuous}
	Let \( \G = (\V, \E)\) be a connected, directed graph with a cycle. Further, let \( e\) be an edge of the cycle and \( \G^c\) be the corresponding cut graph with boundary data as defined in \cref{eq: cut network boundary data}. Then the mass fraction \( \eta_v^c = \eta_v^c(\lambda,\mu)\) is 
	\begin{enumerate}[label=(\roman*)]
		\item continuous in \(\mu\) for all \( v \in \V^c\),
		\item continuous in \(\lambda \neq 0 \) for all \( v \in \V^c \),
		\item continuous in \( \lambda  = 0\) for all \( v \in \V^c \setminus \{ v_\cl, v_\crr\}\),
		\item right-sided continuous in \( \lambda = 0\) for \( v \in \{ v_\cl, v_\crr\}\).
	\end{enumerate}
\end{lemma}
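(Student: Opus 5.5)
The key observation is that on the cut graph \(\G^c\) the mixing equations \eqref{eq: model mixing} do not involve the pressure. The mass fraction is therefore exactly the object studied for ideal gas in \cite{Ulke2025}, and the compressibility factor \(Z\) plays no role. I would reproduce that argument in self-contained form.

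\textbf{Setup.} Since \(\G^c\) is a tree, the flows \(q_e^c = q_e^c(\lambda)\) are uniquely determined by the loads via \(A^c\bm{q}^c=\bm{b}^c\). They are affine, hence continuous, in \(\lambda\) and independent of \(\mu\) \cite[Corollary 3.11]{Ulke2025}.

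Orient every edge of \(\G^c\) along its flow. Because \(\G^c\) is a tree, the resulting directed graph is acyclic, so the nodal mass fractions can be computed recursively along a topological order. Each \(\eta_v^c\) is then a rational expression: a weighted average of the supply compositions \(\zeta_w\) and \(\mu\). The weights are products of ratios of the form
\[
\frac{(a(v,e)q_e^c)^+}{\sum_{e'\in\E(v)}(a(v,e')q_{e'}^c)^+ + (b_v^c)^-}.
\]

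\textbf{Continuity in \(\mu\) and in \(\lambda \neq 0\).} In this representation \(\mu\) enters linearly, namely through \(\zeta^c_{v_\crr}\) or \(\zeta^c_{v_\cl}\) times \((b^c)^-\). This gives (i).

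For (ii), I would prove by induction along the topological order that \(\eta_v^c\) is continuous wherever the denominator \(\sum_{e}(a(v,e)q_e^c)^+ + (b_v^c)^-\) is positive. The functions \((\cdot)^\pm\) are continuous, and the supply choice in \eqref{eq: cut network boundary data} is locally constant for \(\lambda\neq 0\).

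One subtlety is a vanishing denominator, i.e.\ a node with no inflow. Such a node has no outflow either, so its value is irrelevant downstream. Moreover, any edge with \(q_e\) crossing zero carries vanishing weight. Consequently the upstream composition switch \eqref{eq: compostion on edge} enters multiplied by \((a(v,e)q_e)^+\to 0\), which makes the composition at downstream nodes continuous even as flow directions in the interior flip. Nodes whose inflow vanishes identically should be handled with the convention from \cite{Ulke2025}.

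\textbf{The point \(\lambda = 0\).} This is the main obstacle. At \(\lambda=0\) the definition of \(\zeta^c\) jumps between \(v_\crr\) and \(v_\cl\), and simultaneously \(b^c_{v_\cl}, b^c_{v_\crr}\to 0\).

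For \(v\notin\{v_\cl,v_\crr\}\), the contribution of \(\mu\) enters through the factor \((\pm\lambda)^-\) or through \(q^c_{e_\cl}, q^c_{e_\crr}\), and this factor tends to \(0\). Its coefficient is bounded in \([0,1]\), so \(\eta_v^c\) is continuous at \(0\), provided its denominator stays positive there. This positivity needs checking: if \(v\) receives gas only from the cut, it should have zero throughput in the limit as well.

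At \(v_\cl\) and \(v_\crr\) themselves, the composition is \(\mu\) on one side and equals the upstream neighbour's composition on the other. These values generally differ, which gives only one-sided continuity. For \(\lambda\ge 0\) the cases agree with those in \eqref{eq: cut network boundary data}: \(v_\crr\) has supply \(\mu\) and \(v_\cl\) takes the value \(\eta^c_{f(e^c)}\), continuously as \(\lambda\downarrow 0\). This yields (iv).
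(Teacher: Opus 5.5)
Your route is the paper's. Its proof only notes that the composition on the tree-shaped cut graph does not see the pressure, takes (i)--(iii) from \cite[Lemma 3.12, Lemma 3.13]{Ulke2025}, and obtains (iv) from the right-continuity of \eqref{eq: cut network boundary data}.

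The gap in your self-contained version sits exactly at the two points you leave open, and it cannot be closed. Your induction shows continuity of \(\eta_v^c\) at every \((\lambda,\mu)\) with \(\lambda\neq0\) at which the throughput \(\sum_{e}(a^c(v,e)q_e^c)^+ + (b_v^c)^-\) of \(v\) is positive. The remark that a node without throughput is ``irrelevant downstream'' protects its neighbours, not \(v\) itself. The positivity you want to check at \(\lambda=0\) is false in general.

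Take the triangle with pipes \((A,B)\), \((B,C)\), \((C,A)\), loads \(b_A=-1\), \(b_B=0\), \(b_C=1\), \(\zeta_A=0\), and cut \(e^c=(C,A)\). Then \(q^c_{(A,B)}=q^c_{(B,C)}=\lambda+1\).
For \(\lambda\in(-1,0)\), node \(B\) is fed by \(A\), so \(\eta^c_B=0\). For \(\lambda<-1\), it is fed from \(v_\cl\) through \(C\), so \(\eta^c_B=\mu\). Hence (ii) fails at \(\lambda=-1\).

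Now cut \((B,C)\) instead. This is precisely the suitable cut that \cref{lemma: beta positiv} produces here, with \(I_\sol=[0,1]\). It gives \(q^c_{(A,B)}=\lambda\), \(\eta^c_B=0\) on \((0,1)\), and \(\eta^c_B=\mu\) for \(\lambda<0\). So (iii) fails at \(B\notin\{v_\cl,v_\crr\}\).

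No value assigned at a zero-throughput point repairs a jump between one-sided limits. As worded, (ii) and (iii) can fail as soon as some node other than \(v_\cl,v_\crr\) loses all of its throughput at some \(\lambda\). This requires \(b_v=0\), e.g.\ a zero-load junction of degree two on the cycle. Likewise, at \(\lambda=0\) the mixing formula at \(v_\cl\) and \(v_\crr\) is \(0/0\), so (iv) is really a statement about a chosen value.

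What your argument does prove is the statement you should state, and it suffices for everything that follows:
\(\eta_v^c\) is affine in \(\mu\), continuous at each \(\lambda\neq0\) where \(v\) has positive throughput, and has one-sided limits everywhere. The last point holds because between consecutive roots \(\beta_e\) the sign pattern of all flows is fixed, so \(\eta_v^c\) is a rational function of \(\lambda\) with values in \([0,1]\). Defining the values at \(v_\cl,v_\crr\) for \(\lambda=0\) as right limits then gives (iv).

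For \cref{lemma: common root} you only need \(\mu_\eta(\lambda)=\eta^c_{f(e^c)}(\lambda)\) for \(\lambda>0\), and \(f(e^c)\) has throughput at least \(\lambda>0\) there.

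For \cref{lemma: pressure continuous}, note that \(\eta_e^c\) can only jump at some \(\lambda_0\) with \(q_e^c(\lambda_0)=0\). There the update \(p\mapsto g(\eta,q,F(\eta,q,p)+S(\eta,q))\) tends to the identity as \(q\to0\). By compactness this convergence is locally uniform in \(p\) and uniform in \(\eta\in[0,1]\). So the pressure stays continuous, even though the ``composition of continuous functions'' argument used there does not literally apply.
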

\begin{proof}
	The continuity with respect to \( \mu\) follows directly from \cite[Lemma 3.13]{Ulke2025}, while the continuity with respect to \( \lambda\) in \(\lambda \neq 0\) for all \( v\in \V^c\) and in \( \lambda = 0\) for all \( v \notin \{v_\cl, v_\crr\}\) follows from \cite[Lemma 3.12]{Ulke2025}. 
	Furthermore, we can additionally exploit that the boundary data \labelcref{eq: cut network boundary data} is right-sided continuous in \( \lambda = 0\), which yields together with \cite[Lemma 3.12]{Ulke2025} the right-sided continuity in \( \lambda = 0\) for \( v \in \{v_\cl, v_\crr\} \).
\end{proof}

For ideal gas, the continuity of the flow and the mass fraction immediately implies the continuity of \(\H_p\), since in this case the pressure change along a path admits a closed-form representation that depends only on the flow and the mass fraction, cf. \cite{Ulke2025}.
For real gases, the structure of the pressure law makes it in general impossible to express the pressure change along a path explicitly in terms of the flow and the composition. 
Nevertheless, we can still establish the continuity of \( \H_p\) with respect to \( \lambda \) and \( \mu\) with an induction based argument. 

\begin{lemma}[{Continuity of the Nodal Pressure}]\label{lemma: pressure continuous}
	Let \( \G = ( \V, \E) \) be a directed, connected graph, \( e^c \) be an edge of the cycle, and \( \G^c\) the cut  graph corresponding to the cut edge \(e^c\). Suppose boundary data according to \cref{eq: cut network boundary data} is given. Then, for all \( v \in \V^c\), the pressure \( p_v^c = p_v^c(\lambda, \mu)\) is 
	\begin{enumerate}[label=(\roman*)]
		\item continuous in \( \mu \),  
		\item continuous in \( \lambda \neq 0 \) and right-sided continuous in \( \lambda = 0.\)
	\end{enumerate}
\end{lemma}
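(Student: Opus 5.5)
The plan is an induction over the tree structure of the cut graph \(\G^c\), starting from the reference node \(v^\ast\) where \(p^c_{v^\ast} = p^\ast\) is fixed independently of \((\lambda,\mu)\). Since \(\G^c\) is a tree, every node \(v\) is joined to \(v^\ast\) by a unique path. I order the nodes by their distance from \(v^\ast\) along this path. The induction hypothesis is that for all nodes at distance \(k\), the pressure is continuous in \(\mu\), continuous in \(\lambda\neq 0\), and right-sided continuous at \(\lambda=0\). The base case \(k=0\) is trivial because \(p^c_{v^\ast}\) is constant.

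For the induction step, let \(w\) be at distance \(k+1\) and \(v\) its predecessor, connected by an edge \(e\).

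\textbf{Compressor edge.} If \(e\in\E_\C\), then \cref{eq: model compressor} gives \(p_w^c = \gamma_e p_v^c\) or \(p_w^c = p_v^c/\gamma_e\). Both are continuous maps of \(p_v^c\), so the regularity is inherited directly.

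\textbf{Pipe edge.} If \(e\in\E_\pipe\), I solve \cref{eq: model pressure loss} for the unknown endpoint via the inverse \(g\) from \cref{lemma: inverse of F}. If \(w=h(e)\), then
\[
p_w^c = g\Big(\eta_e^c,\, q_e^c,\, F(\eta_e^c,q_e^c,p_v^c) - \tfrac{\lambda_\fr}{2D}\tfrac{RT}{M(\eta_e^c)}L_e q_e^c|q_e^c|\Big),
\]
and analogously with the sign reversed if \(w=f(e)\). The function \(F\) is jointly continuous in \((\eta,q,p)\) on \(D_F\). This uses \cref{assumption} and \cref{assumption 2}, together with continuity of \(M(\eta)\) and the fact that the integrand \(p/Z\) depends continuously on \(\eta\). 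The inverse \(g\) is continuous by \cref{lemma: inverse of F}. So \(p_w^c\) is a composition of continuous maps with the inputs \(q_e^c\), \(\eta_e^c\) and \(p_v^c\).

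\textbf{Regularity of the inputs.} The flow \(q_e^c\) is continuous in \(\lambda\) and independent of \(\mu\) by \cite[Corollary 3.11]{Ulke2025}. The pressure \(p_v^c\) has the required regularity by the induction hypothesis.

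The delicate input is \(\eta_e^c\). By \cref{eq: compostion on edge}, it equals \(\eta^c_{f(e)}\) or \(\eta^c_{h(e)}\) depending on the sign of \(q_e^c\), so it can jump when \(q_e^c\) changes sign. I handle this as follows.

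\textbf{Jumps in the upwind composition.} At a parameter where \(q_e^c\neq 0\), the sign of \(q_e^c\) is locally constant because \(q_e^c\) is continuous. Hence \(\eta_e^c\) locally coincides with a nodal composition, which has the claimed regularity by \cref{lemma: composition continuous}. Right-sidedness at \(\lambda=0\) is preserved because one-sided limits pass through continuous compositions.

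At a parameter where \(q_e^c = 0\), the composition may jump, but the pressure drop does not see it. For \(q=0\), \(F(\eta,0,p)=\int p/Z(\eta,p)\dd p\), and the friction term vanishes, so the pressure equation reduces to \(p_w^c=p_v^c\) regardless of \(\eta\). To make this rigorous, I use that \(\eta_e^c\) always stays in the compact set \([0,1]\) and that the map \((\eta,q,p)\mapsto g(\eta,q,F(\eta,q,p)-c(\eta)q|q|)\) is uniformly continuous on compact neighbourhoods. Thus as \(q_e^c\to 0\), \(p_w^c - p_v^c \to 0\) uniformly in \(\eta\), which gives continuity even though \(\eta_e^c\) is discontinuous there. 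The same argument covers continuity in \(\mu\).

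\textbf{Main obstacle.} I expect this zero-flow case to be the hardest step. It requires a uniform-in-\(\eta\) continuity of the implicit solution map, and this map is only locally invertible. The plan is to argue on a compact subset of the subsonic domain \(D_F\) around the limiting point \((\eta,0,p_v^c)\), where \(\partial_pF\) is bounded away from zero. On that set, the Implicit Function Theorem gives a neighbourhood that is uniform over \(\eta\in[0,1]\).
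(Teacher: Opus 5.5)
Your proposal is correct and shares the paper's skeleton: induction along the unique path from \(v^\ast\) in the tree \(\G^c\), a trivial base case, the compressor step via \(\gamma_e\), and the pipe step \(p_{v_{i+1}}^c = g\bigl(\eta_{e_i}^c, q_{e_i}^c, F(\eta_{e_i}^c, q_{e_i}^c, p_{v_i}^c) + S(\eta_{e_i}^c, q_{e_i}^c)\bigr)\).

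Where you differ is in the treatment of the edge composition. The paper asserts that \cref{lemma: composition continuous} together with \cref{eq: compostion on edge} makes \(\eta_e^c\) continuous in \(\lambda \neq 0\) and right-continuous at \(\lambda = 0\), and then composes continuous maps. That assertion is not literally true. On an edge of the former cycle, \(q_e^c\) changes sign at \(\lambda = \beta_e\), and there \(\eta_e^c\) switches between \(\eta_{f(e)}^c\) and \(\eta_{h(e)}^c\). These generally differ, since no gas passes through \(e\) at that parameter. For \(\beta_e = 0\), depending on the orientation of \(e\), the convention \(\eta_e = \eta_{f(e)}\) at \(q_e = 0\) even breaks right-continuity.

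Your argument repairs this. At \(q = 0\) the pipe relation collapses to \(p_w = p_v\) for every \(\eta\). Combined with uniform continuity of \((\eta,q,p) \mapsto g(\eta,q,F(\eta,q,p) + S(\eta,q))\) on \([0,1]\) times a compact neighbourhood of \((0,p_v^c)\), this is exactly what makes the composition argument rigorous. It costs one compactness step and removes the reliance on a false continuity claim for \(\eta_e^c\).

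Your mechanism also gives slightly more than the lemma states. Granting \cref{lemma: composition continuous}, the only nodes with merely right-continuous composition are the leaves \(v_\cl\) and \(v_\crr\). They are attached by edges carrying flow exactly \(\lambda\), so for fixed \(\mu\) the pressures are in fact two-sided continuous at \(\lambda = 0\).

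One caveat is shared with the paper. The joint continuity of \(F\), and the \(C^1\) regularity needed for the Implicit Function Theorem, does not follow from the separate differentiability in \cref{assumption} and \cref{assumption 2}. You are relying on the same tacit strengthening as \cref{lemma: inverse of F}, not on those assumptions as you state.
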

\begin{proof}
	We fix an arbitrary node \( v \in \V^c \setminus  \{v^\ast\}\). As the cut graph is tree-shaped, there exists only one path connecting \(v\) to \(v^\ast\).
	We set the first node of the path to \( v_1 = v^\ast\) and traverse the path, successively labeling each node until we reach the end of the path, i.e., the node \( v_{n_v} = v\).
	Then, the nodes of the path are given by \(v_i\) for \( i=1,\ldots, n_v\). Further, two incident nodes \( v_i\) and \( v_{i+1}\) are connected by the edge \( e_i\); see \cref{fig: numbering path} for an illustration.
	To prove the continuity of \(p_v^c\), we proceed with an induction over the position \(i\) of nodes along the path. 
	
	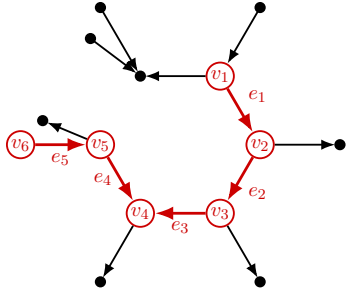
\begin{figure}[ht]
		\centering
		\newcommand{\radius}{1.5}   
\scalebox{0.7}{
	\begin{tikzpicture}[
		> = {Latex[scale=1.0]}, 
		line width=1pt,
		every node/.style={}
		]
		
		\tikzstyle{cycle}=[
		red!80!black,
		draw=red!80!black,
		circle,
		inner sep=1pt,
		minimum size=0pt
		]
		
		\tikzstyle{filled}=[
		fill=black, 
		draw=black,
		circle,
		inner sep=1pt,
		outer sep=0pt,
		minimum size=5pt
		]
		
		\tikzstyle{grey}=[
		fill=black!30, 
		draw=black!30,
		circle,
		inner sep=1pt,
		outer sep=0pt,
		minimum size=5pt
		]

		
		\node[filled] (B1) at ({\radius*cos(120)}, {\radius*sin(120)}) {};
		\node[cycle] (B2) at ({\radius*cos(60)}, {\radius*sin(60)}) {$v_1$};
		\node[cycle] (B3) at ({\radius*cos(0)}, {\radius*sin(0)}) {$v_{2}$};
		\node[cycle] (B4) at ({\radius*cos(300)}, {\radius*sin(300)}) {$v_{3}$};
		\node[cycle] (B5) at ({\radius*cos(240)}, {\radius*sin(240)}) {$v_{4}$};
		\node[cycle] (B6) at ({\radius*cos(180)}, {\radius*sin(180)}) {$v_5$};
		
		\foreach \i/\name in {0/C3, 60/C2, 120/C1, 240/C5, 300/C4} {
			\node[filled] (\name) at ({2*\radius*cos(\i)}, {2*\radius*sin(\i)}) {};
		}
		
		\node[cycle] (C6) at ({2*\radius*cos(180)}, {2*\radius*sin(180)}) {$v_6$};
		
		\draw[->] (B2) -- (B1) node[midway, above] {};
		\draw[->, red!80!black, line width = 1.5pt] (B2) -- (B3) node[midway, above right] {$e_1$};
		\draw[->, red!80!black, line width = 1.5pt] (B3) -- (B4) node[midway, below right] {$e_2$};
		\draw[->, red!80!black, line width = 1.5pt] (B4) -- (B5) node[midway, below] {$e_3$};
		\draw[->, red!80!black, line width = 1.5pt] (B6) -- (B5) node[midway, left] {$e_4$};
		
		\draw[->] (C1) -- (B1) {};
		\draw[->] (C2) -- (B2) {};
		\draw[->] (B3) -- (C3) {};
		\draw[->] (B4) -- (C4) {};
		\draw[->] (B5) -- (C5) {};
		\draw[->, red!80!black, line width = 1.5pt] (C6) -- (B6) node[midway, below] {$e_5$};
		
		\node[filled] (vr) at ({1.75*\radius*cos(130)}, {1.75*\radius*sin(130)}) {};
		\node[filled] (vl) at ({1.75*\radius*cos(170)}, {1.75*\radius*sin(170)}) {};
		
		\draw[->] (vr) -- (B1) node[midway, below left] {};
		\draw[->] (B6) -- (vl) node[midway, above right] {};
		
	\end{tikzpicture}
}
		\caption{A cut network with the path connecting the node \(v_1 = v^\ast\) to the node \(v_6 = v\). The path and the numbering are highlighted in red.}
		\label{fig: numbering path}
	\end{figure}
	
	\emph{Base case.} At position \( i = 1\), we have the node \( v^\ast\). Since the pressure at node \(v^\ast\) is given through boundary data, the pressure \( p_{v^\ast}^c\) is constant, and thus continuous in both \( \lambda\) and \( \mu\).

	\emph{Induction step.} Suppose the pressure \(p_{v_i}^c\) is continuous in \( \mu\) and in \( \lambda\neq 0\) as well as right-sided continuous in \(\lambda = 0\) for an arbitrary yet fixed \( i < n_v\). 
	The goal is to prove that -- given the continuity of \(p_{v_i}^c\) -- the pressure \(p_{v_{i+1}}^c\) is also continuous. 
	
	Therefore, we distinguish between two cases. The first case is that \( e_i = (v_i, v_{i+1})\) and the second one that \( e_i = (v_{i+1}, v_i)\).
	We  prove the first case.
	If the edge \(e_i \) is a pipe, we utilize \cref{eq: model pressure loss} to express the pressure \( p_{v_{i+1}}^c\) in terms of the pressure \( p_{v_i}^c\):
	\begin{align*}
		F(\eta_{e_i}^c, q_{e_i}^c, p_{v_{i+1}}^c) - F(\eta_{e_i}^c, q_{e_i}^c, p_{v_i}^c) 
		& = \underbrace{- \frac{\lambda_\fr}{2D} \frac{R \, T}{M(\eta_{e_i}^c)} 
			L_{e_i}^c q_{e_i}^c\vert q_{e_i}^c \vert}_{=S(\eta_{e_i}^c, q_{e_i}^c)}, 
	\end{align*}
	with \cref{lemma: inverse of F} this leads to:
	\begin{align*}
		&& F(\eta_{e_i}^c, q_{e_i}^c, p_{v_{i+1}}^c) & = F(\eta_{e_i}^c, q_{e_i}^c, p_{v_i}^c)  + S(\eta_{e_i}^c, q_{e_i}^c) \\
		\Leftrightarrow && p_{v_{i+1}}^c & = g\left(\eta_{e_i}^c, q_{e_i}^c, F(\eta_{e_i}^c, q_{e_i}^c, p_{v_i}^c)  + S(\eta_{e_i}^c, q_{e_i}^c)\right).
	\end{align*}
	Recall the dependencies \( \eta_{e_i}^c = \eta_{e_i}^c(\lambda, \mu)\), \( q_{e_i}^c = q_{e_i}^c(\lambda)\), and \( p_{v_i}^c = p_{v_i}^c(\lambda, \mu)\). 
	From \cite[Lemma 3.10]{Ulke2025} follows that \(q_e^c\) is continuous in \(\lambda \). \cref{lemma: composition continuous} together with \cref{eq: compostion on edge} implies that \(\eta_e^c\) is continuous in \(\mu\) and in \( \lambda \neq 0\) as well as right-sided continuous in \( \lambda = 0\).
	Moreover, the function \( g\) is continuous in all of its components by \cref{lemma: inverse of F}, and \( S\) is continuous continuous by definition.	
	Thus, \(p_{v_{i+1}}^c\) is continuous as a composition of continuous functions.
	If the edge \(e_i\) is a compressor, we have the relation \(  p_{v_{i+1}}^c = \gamma_{e_i} \, p_{v_{i}}^c \). 
	However, in this case the continuity of \( p_{v_{i+1}}^c\) follows immediately since \( \gamma_{e_i}\) is a constant.
	
	The case \( e_i = (v_{i+1}, v_i)\) is analogous and thus, as \(v = v_{n_v}\), we obtain that \( p_v^c\) is continuous in \(\mu\) and in \( \lambda \neq 0\) as well as right-sided continuous in \( \lambda = 0\).
\end{proof}

\begin{korollar}\label{korollar: Hp continuous}
		Let \( \G = ( \V, \E) \) be a directed, connected graph, \( e^c \) be an edge of the cycle, and \( \G^c\) the cut  graph corresponding to the cut edge \(e^c\). Suppose boundary data according to \cref{eq: cut network boundary data} is given. Then the function \(\H_p(\lambda,\mu) = p_{v_\crr}^c(\lambda, \mu) - p_{v_\cl}^c(\lambda, \mu)\) is
	\begin{enumerate}[label=(\roman*)]
		\item continuous in \( \mu \),  
		\item continuous in \( \lambda \neq 0 \) and right-sided continuous in \( \lambda = 0.\)
	\end{enumerate}
\end{korollar}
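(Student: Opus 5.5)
This follows directly from \cref{lemma: pressure continuous}. The plan is to apply that lemma to the two nodes $v_\crr$ and $v_\cl$ generated by the cut, which both belong to $\V^c$. The lemma then gives that each of $p_{v_\crr}^c(\lambda,\mu)$ and $p_{v_\cl}^c(\lambda,\mu)$ is continuous in $\mu$, continuous in $\lambda \neq 0$, and right-sided continuous at $\lambda = 0$. Since $\H_p$ is defined as the difference of these two functions, it remains to check that each continuity notion survives taking a difference.

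For (i), fix $\lambda$. Then $\mu \mapsto p_{v_\crr}^c(\lambda,\mu) - p_{v_\cl}^c(\lambda,\mu)$ is a difference of two continuous real-valued functions and is therefore continuous.

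For (ii), fix $\mu$ and take $\lambda_0 \neq 0$. Both pressures are continuous at $\lambda_0$, so $\lambda \mapsto \H_p(\lambda,\mu)$ is continuous there as well. At $\lambda_0 = 0$, take any sequence $\lambda_n \downarrow 0$. Right-sided continuity gives $p_{v_\crr}^c(\lambda_n,\mu) \to p_{v_\crr}^c(0,\mu)$ and $p_{v_\cl}^c(\lambda_n,\mu) \to p_{v_\cl}^c(0,\mu)$. Linearity of limits then yields $\H_p(\lambda_n,\mu) \to \H_p(0,\mu)$, which is right-sided continuity of $\H_p$ at $\lambda = 0$.

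The only point needing care is that \cref{lemma: pressure continuous} really covers $v_\cl$ and $v_\crr$. In its induction, the path from $v^\ast$ to $v_\cl$ or $v_\crr$ ends with the edge $e_\cl$ or $e_\crr$. The mass fraction on these edges is only right-sided continuous at $\lambda = 0$, by \cref{lemma: composition continuous}(iv) together with \cref{eq: compostion on edge}. The induction step only uses continuity of each input of the continuous map $g$ in the same one-sided sense, so the conclusion still holds at these endpoints. A further requirement is that the pressures stay in the domain $D_F$ where \cref{lemma: inverse of F} applies. This is guaranteed by the standing subsonic assumption, under which the lemma was proved. With this confirmed, the corollary follows immediately and needs no further work.
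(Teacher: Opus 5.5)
Your proposal is correct and matches the paper. The paper states this corollary without a separate proof, as an immediate consequence of \cref{lemma: pressure continuous} applied to the nodes $v_\crr, v_\cl \in \V^c$, using that continuity in $\mu$, continuity in $\lambda \neq 0$, and right-sided continuity at $\lambda = 0$ are all preserved under taking differences.
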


\subsection{Existence of a Suitable Cut Edge}

For the ideal gas case, we obtain the interval \(I_\sol\) utilizing \cite[Lemma 3.15]{Ulke2025}. The key observation here is that if \(\lambda\) is sufficiently small the gas flows from node \(v_\crr\) to node \( v_\cl\) in the cut graph, and if it is sufficiently large the gas flow in the opposite direction. 
On the original graph this translates into a circular gas flow. However, as the gas flow must be acyclic, we can now bound the admissible values for \(\lambda\) from below and above, which yields the interval \(I_\sol\).
The boundary values of \(I_\sol\) depend only on the flow \(q_e^c\) and loads \( b_v\) of the network as well as the network topology and the choice of the cut edge \(e^c\). Hence, \cite[Lemma 3.15]{Ulke2025} is independent from the pressure change equation and transfers to case of arbitrary compressiblity factors \(Z\).

As the interval \(I_\sol\) depends on the cut edge, we aim to find a cut edge \(e^c\) such that \( I_\sol \subset [0,\infty)\).
For ideal gases, the question of finding a \emph{suitable} cut edge does not occur because in this case the function \( \H_p\) is continuous for all \(\lambda \in \R\).
To address this question for real gases, we take a closer look on the derivation of the interval \(I_\sol\) and introduce necessary notation.

\begin{definition}[Sub-Graph of a Cycle]
	Let \( \G = (\V, \E)\) be a directed, connected graph with a cycle. Then, the cycle forms a connected sub-graph \( \G_\cycle = (\V_\cycle, \E_\cycle)\) of the graph \( \G\) where \( \V_\cycle \subseteq \V \) denotes the nodes, and \( \E_\cycle  \subseteq \E \) the edges of the cycle. Further, \( n_\cycle = \vert \V_\cycle \vert\) denotes the number of nodes in the cycle.
\end{definition}

\begin{definition}[Sub-Graph of a Cut Cycle]
	Let \( \G = (\V, \E)\) be a directed, connected graph with a cycle and let \( e^c \in \E_\cycle\) be an edge in the cycle. By cutting the edge \(e^c\), we obtain the cut graph \( \G_\cycle^c = ( \V_\cycle^c, \E_\cycle^c)\) of the cycle, where
	\begin{gather*}
		\V_\cycle^c = \V_\cycle \cup \{ v_\cl, v_\crr\}, \quad 
		\E_\cycle^c = \E_\cycle \setminus \{e^c\} \cup \{e_\cl, e_\crr\}, 
	\end{gather*}
	The cut graph \( \G_\cycle^c \) is tree-shaped and only consists of a single path connecting the two new nodes \(v_\cl\) and \(v_\crr\). It also forms a sub-graph of the cut graph \( \G^c\) of \( \G\).
\end{definition}

The flow on the edges of the cut cycle is linear in \( \lambda\) and independent of \(\mu\), cf. \cite[Lemma 3.10]{Ulke2025}, which implies that each flow \(q_e^c\) has a unique root \(\beta_e\) with respect to \(\lambda\) for all \( e \in \E_\cycle^c\).
%
Together with \cite[Lemma 3.15]{Ulke2025}, we can identify suitable boundary values \(\lambda^{\pm}\) for the interval \(I_\sol = [\lambda^-, \lambda^+]\) and summarize the finding in the following lemma.

\begin{lemma}\label{lemma: beta}
	Let \( \G=(\V,\E)\) be a directed, connected graph with one cycle and let \( e^c \in\E_\cycle\) be the cut edge. Further, denote the root of the flow \( q_e^c\) with respect to \(\lambda\) by \( \beta_e\) for \(e \in \E_\cycle^c\), cf. \cite[Lemma 3.10]{Ulke2025}, and set:
	\begin{equation*}
		\lambda^- = \min_{e\in \E_\cycle^c} \beta_e
		\quad \text{and} \quad \lambda^+  = \max_{e\in \E_\cycle^c} \beta_e,
	\end{equation*}
	Then a solution \((\lambda^\ast,\mu^\ast)\) to the non-linear system~\labelcref{eq: Hp and Heta} must satisfy \( \lambda^\ast \in \left[\lambda^-, \lambda^+\right]\).
	Furthermore, the values  \( \lambda^\pm\) satisfy the following inequalities:
	\begin{equation*}
		\H_p(\lambda^-, \mu) \le 0 \quad \text{and} \quad \H_p(\lambda^+, \mu) \ge 0
		\quad \text{for all} \quad \mu \in [0,1].
	\end{equation*}
\end{lemma}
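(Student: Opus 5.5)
Our plan is to reduce the statement to the structure of the cut cycle, which is a single path from \(v_\crr\) to \(v_\cl\) in \(\G^c_\cycle\). By \cite[Lemma 3.10]{Ulke2025}, each flow \(q_e^c\) with \(e \in \E^c_\cycle\) is affine in \(\lambda\) with slope \(\pm 1\). We orient the path from \(v_\crr\) to \(v_\cl\) and write \(s_e = \pm 1\) according to whether \(e\) points along this orientation. Then \(\tilde q_e(\lambda) := s_e q_e^c(\lambda)\), the flow measured along the path, is strictly increasing in \(\lambda\), since increasing \(\lambda\) pushes more gas out at \(v_\cl\) and injects more at \(v_\crr\). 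Its root is \(\beta_e\), so for \(\lambda \ge \lambda^+\) every \(\tilde q_e \ge 0\), and for \(\lambda \le \lambda^-\) every \(\tilde q_e \le 0\).

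\emph{Step 1: \(\lambda^\ast \in [\lambda^-,\lambda^+]\).} Suppose \(\lambda^\ast > \lambda^+\). Then all \(\tilde q_e > 0\), so in the original graph, after gluing the cut edge (which carries \(\lambda^\ast > 0\) in the same orientation), gas circulates around the whole cycle in one direction. Compressors may lie on the cycle and raise pressure. However, they are prohibited from forming cycles, and we use the acyclicity argument of \cite[Lemma 3.15]{Ulke2025}, which depends only on flows and loads and not on \(Z\), to exclude this. The case \(\lambda^\ast < \lambda^-\) is symmetric. An alternative route, which also covers the compressor case, is to obtain Step 1 from the sign statements of Step 2 made strict outside the interval: \(\H_p > 0\) for \(\lambda > \lambda^+\), so no root lies there.

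\emph{Step 2: signs of \(\H_p\).} Fix \(\mu\) and \(\lambda = \lambda^+\), so all \(\tilde q_e \ge 0\). We traverse the path from \(v_\crr\) to \(v_\cl\) and compare the pressure at consecutive nodes. For a pipe traversed along its orientation, \cref{eq: model pressure loss} together with the strict monotonicity of \(F\) in \(p\) (\cref{lemma: F}) gives: if the right-hand side \(S \le 0\), then \(F(p_{h}) \le F(p_f)\), hence \(p_h \le p_f\). The pressure is therefore nonincreasing in the flow direction. For a pipe traversed against its orientation, \(q_e \le 0\) makes \(S \ge 0\), and the same conclusion holds along the traversal direction. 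The additional edges \(e_\cl, e_\crr\) are pipes and are treated identically.

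Chaining these inequalities gives \(p^c_{v_\cl} \le p^c_{v_\crr}\), that is, \(\H_p(\lambda^+,\mu) \ge 0\). This argument uses only monotonicity of \(F\) and needs no closed form, which is exactly what replaces the explicit ideal-gas computation. At \(\lambda^-\) all \(\tilde q_e \le 0\), and the same argument yields \(\H_p(\lambda^-,\mu) \le 0\).

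\emph{Main obstacle: compressors on the cycle.} A compressor traversed in the flow direction raises the pressure by \(\gamma_e \ge 1\), which breaks the monotone chain. By \cref{assumption2}, \(q_e \ge 0\) on every compressor, so at \(\lambda^\pm\) only one traversal direction can carry nonzero flow across it. Our first attempt is to argue that a compressor whose orientation conflicts with \(\tilde q_e\) forces \(\beta_e\) to be an endpoint with \(q_e = 0\) there, so it causes no violation. Otherwise, we fall back on the paper's standing prohibition of compressor-induced circular flows, which confines compressors to configurations where the sign argument of \cite[Lemma 3.15]{Ulke2025} still applies. We expect making this compressor case rigorous to be the delicate point of the proof.
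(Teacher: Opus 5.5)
Your Step 2, and the strict version you sketch for Step 1, is the paper's argument. The paper likewise reduces to \cite[Lemma 3.15]{Ulke2025} and only re-checks that pressure is nonincreasing in the flow direction. It does this via $\sign\bigl(F(\eta_e,q_e,p_{h(e)})-F(\eta_e,q_e,p_{f(e)})\bigr)=-\sign(q_e)$ and the strict monotonicity from \cref{lemma: F}, which is exactly what your chaining along the path from $v_\crr$ to $v_\cl$ uses, for every $\mu$.

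What needs correcting is your compressor paragraph, which you leave open as the ``delicate point''. The paper settles it in one line: it reads its standing assumption as excluding any compressor from the cycle (``gas flow must be acyclic unless there is a compressor in a cycle, but this is prohibited by assumption''). So every edge of $\E^c_\cycle$ is a pipe and nothing remains to prove.

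Your reading, in Step 1, is that compressors may sit on the cycle as long as they do not form a cycle themselves. Under that reading the lemma is false, so no refinement can rescue it. Take the triangle with compressor $(v_1,v_2)$, pipes $(v_2,v_3)$ and $e^c=(v_3,v_1)$, $v^\ast=v_1$, and loads $b_{v_1}=-1$, $b_{v_2}=0$, $b_{v_3}=1$.
\begin{itemize}
\item Then $\beta_{e_1}=\beta_{e_2}=-1$, so $\lambda^+=0$.
\item The compressor carries $q=1\ge 0$, consistent with \cref{assumption2}.
\item Since $e_\crr$ and $e_\cl$ carry zero flow, $\H_p(0,\mu)=p_{v_1}-p_{v_3}$. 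Here $p_{v_3}$ arises from $\gamma p_{v_1}$ by a single pipe drop, so $\H_p(0,\mu)<0$ once $\gamma$ is large.
\end{itemize}
In particular, your claim that the strict-sign route ``also covers the compressor case'' is wrong.

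Your first attempt also has the orientation reversed. At $\lambda^+$, a compressor pointing against the path direction gives $p_{v_i}=\gamma_e p_{v_{i+1}}\ge p_{v_{i+1}}$ and is harmless whatever its flow. A compressor pointing along the path raises the downstream pressure while carrying perfectly admissible nonnegative flow; this is what breaks the chain. At $\lambda^-$ the roles swap. Moreover, \cref{assumption2} constrains solutions, not the cut-graph flows at the parameter values $\lambda^\pm$.

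So state explicitly that $\E_\cycle$ contains no compressors and drop that paragraph.
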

\begin{proof}
	The proof is analogous to \cite[Lemma 3.15]{Ulke2025} for the case of an ideal gas, because the flow on the cut graph is independent of the pressure law since the cut graph is tree-shaped.
	The proof exploits the fact that pressure decreases in the flow direction, i.e., gas flow must be acyclic unless there is a compressor in a cycle, but this is prohibited by assumption. 
	For real gas, pressure also decreases in the flow direction, because with \cref{eq: model pressure loss}, we have:
	\begin{align*}
		&& F(\eta_e, q_e, p_{h(e)}) - F(\eta_e, q_e, p_{f(e)})
		& = \underbrace{- \frac{\lambda_\fr}{2 D} \frac{R \, T}{M(\eta_e)} L_e \vert q_e \vert}_{< 0} q_e \\
		\Rightarrow && \sign \left( F(\eta_e, q_e, p_{h(e)}) - F(\eta_e, q_e, p_{f(e)})\right)
		& = - \sign(q_e).
	\end{align*}
	When the gas flows from node \( f(e)\) to node \( h(e)\), we have \( q_e \ge 0\) and thus,
	\begin{equation*}
		F(\eta_e, q_e, p_{h(e)}) \le F(\eta_e, q_e, p_{f(e)}).
	\end{equation*}
	Due to the monotonicity of \(F\) with respect to the pressure \( p\), cf. \cref{lemma: F}, we get \( p_{h(e)} \le p_{f(e)}\). The case when gas flows in the opposite direction, i.e., from node \( h(e)\) to node \( f(e)\), follows analogously and results in \( p_{h(e)} \ge p_{f(e)}\) as here we have \(q_e \le 0.\)
\end{proof}

The values \( \beta_e\) depend on the choice of cut edge \(e^c\), and consequently so does the interval \(I_\sol\).
Therefore, the question whether there exists a cut edge \( e^c\) such that \( I_\sol \subset [0,\infty) \), is equivalent to finding a cut edge \(e^c\) such that \( \beta_e \ge 0\) for all \( e \in \E_\cycle^c\).

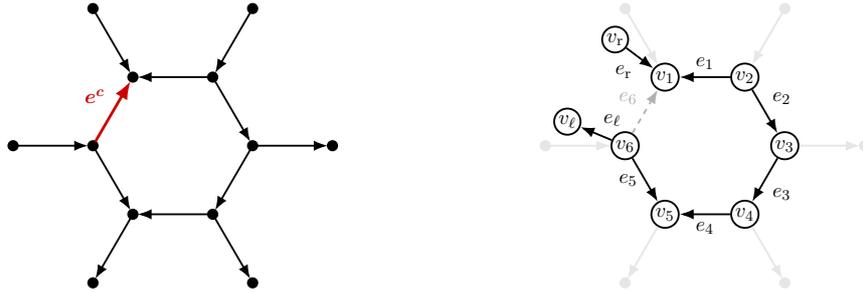
\begin{figure}[ht]
	\centering
	\begin{subfigure}[t]{0.45\textwidth}
		\centering
		\newcommand{\radius}{1.5} 

\scalebox{0.7}{
	\begin{tikzpicture}[
		> = {Latex[scale=1.0]}, 
		line width=1pt,
		every node/.style={}
		]
		
		\tikzstyle{cycle}=[
		fill=white, 
		draw=black,
		circle,
		inner sep=1pt,
		minimum size=0pt
		]
		
		\tikzstyle{filled}=[
		fill=black, 
		draw=black,
		circle,
		inner sep=1pt,
		outer sep=0pt,
		minimum size=5pt
		]
		
		\foreach \pos/\name in {0/B3, 60/B2, 120/B1, 180/B6, 240/B5, 300/B4} {
			\node[filled] (\name) at ({\radius*cos(\pos)}, {\radius*sin(\pos)}) {};
		}
		
		\foreach \i/\name in {0/C3, 60/C2, 120/C1, 180/C6, 240/C5, 300/C4} {
			\node[filled] (\name) at ({2*\radius*cos(\i)}, {2*\radius*sin(\i)}) {};
		}
		
		\draw[->] (B2) -- (B1) node[midway, above right] {};
		\draw[->] (B2) -- (B3) node[midway, right] {};
		\draw[->] (B3) -- (B4) node[midway, below right] {};
		\draw[->] (B4) -- (B5) node[midway, below left] {};
		\draw[->] (B6) -- (B5) node[midway, left] {};
		\draw[->, red!80!black, line width = 1.5pt] (B6) -- (B1) node[midway, above left] {$\bm{e^c}$};
		
		\draw[->] (C1) -- (B1) {};
		\draw[->] (C2) -- (B2) {};
		\draw[->] (B3) -- (C3) {};
		\draw[->] (B4) -- (C4) {};
		\draw[->] (B5) -- (C5) {};
		\draw[->] (C6) -- (B6) {};
		
		
	\end{tikzpicture}
}
	\end{subfigure}%
	\begin{subfigure}[t]{0.45\textwidth}
		\centering
		\newcommand{\radius}{1.5}   

\scalebox{0.7}{
\begin{tikzpicture}[
	> = {Latex[scale=1.0]}, 
	line width=1pt,
	every node/.style={}
	]
	
	\tikzstyle{cycle}=[
	fill=white, 
	draw=black,
	circle,
	inner sep=1pt,
	minimum size=0pt
	]
	
	\tikzstyle{filled}=[
	fill=black, 
	draw=black,
	circle,
	inner sep=1pt,
	outer sep=0pt,
	minimum size=5pt
	]
	
	\tikzstyle{grey}=[
	fill=black!10, 
	draw=black!10,
	circle,
	inner sep=1pt,
	outer sep=0pt,
	minimum size=5pt
	]

	\foreach \i/\pos/\name in {3/0/B3, 2/60/B2, 1/120/B1, 6/180/B6, 5/240/B5, 4/300/B4} {
		\node[cycle] (\name) at ({\radius*cos(\pos)}, {\radius*sin(\pos)}) {$v_{\i}$};
	}
		
	\foreach \i/\name in {0/C3, 60/C2, 120/C1, 180/C6, 240/C5, 300/C4} {
		\node[grey] (\name) at ({2*\radius*cos(\i)}, {2*\radius*sin(\i)}) {};
	}
		
	\draw[->] (B2) -- (B1) node[midway, above] {$e_1$};
	\draw[->] (B2) -- (B3) node[midway, above right] {$e_2$};
	\draw[->] (B3) -- (B4) node[midway, below right] {$e_3$};
	\draw[->] (B4) -- (B5) node[midway, below] {$e_4$};
	\draw[->] (B6) -- (B5) node[midway, left] {$e_5$};
	\draw[->, dashed, color=black!30] (B6) -- (B1) node[midway, above left] {$e_6$};
		
	\draw[->, color=black!10] (C1) -- (B1) {};
	\draw[->, color=black!10] (C2) -- (B2) {};
	\draw[->, color=black!10] (B3) -- (C3) {};
	\draw[->, color=black!10] (B4) -- (C4) {};
	\draw[->, color=black!10] (B5) -- (C5) {};
	\draw[->, color=black!10] (C6) -- (B6) {};
		
	\node[cycle] (vr) at ({1.75*\radius*cos(130)}, {1.75*\radius*sin(130)}) {$v_\crr$};
	\node[cycle] (vl) at ({1.75*\radius*cos(170)}, {1.75*\radius*sin(170)}) {$v_\cl$};
	
	\draw[->] (vr) -- (B1) node[midway, below left] {$e_\crr$};
	\draw[->] (B6) -- (vl) node[midway, above right] {$e_\cl$};
				
\end{tikzpicture}
}
	\end{subfigure}
	\caption{A gas network with cut edge \( e^c\) and \(n_\cycle=6\) (left), and the corresponding cut graph with nodes and edges of the former cycle being numbered (right).}
	\label{fig: numbering}
\end{figure}

In the following, it is crucial to understand the derivation and computation of the values~\( \beta_e\). 
Therefore, we fix a cut edge \( e^c\) and number the nodes and edges of the cycle. For the nodes, we set \( v_1 = h(e^c)\) and traverse through the cycle, successively labeling each subsequent node until we reach the end of the cycle, where we set \( v_{n_\cycle} = f(e^c)\); see \cref{fig: numbering} for an illustration.
%
For the edges, we follow a similar pattern: each edge \(e_i\) connects the nodes \( v_i\) and \( v_{i+1}\) for \( i =1, \ldots, n_\cycle-1\) with the exception of the edge \( e_{n_\cycle} = e^c\), which connects the nodes \(v_{n_\cycle}\) and \( v_1\) closing the cycle.


With this notation at hand, we can express the flow \(q_e^c\) on an edge of the former cycle as a linear function with respect to \(\lambda\), which introduces the values \( \beta_e\):
\begin{lemma}[{see \cite[Lemma 3.10]{Ulke2025} and \cite[Theorem 1]{Gugat2018}}]\label{lemma: flow linear}
	Let \( \G= (\E,\V)\) be a network with one cycle and let \( e^c \in \E_\cycle\) be the cut edge. 
	Then, the flow on the edges \( e \in \E_\cycle^c\) of the former cycle is linear in \( \lambda\).
	Adapting the numbering of nodes and edges illustrated in \cref{fig: numbering}, the flow \(q_{e_i}^c\) on these edges for \( i \in\{1,\ldots,n_\cycle-1\}\) reads:  
	\begin{alignat}{1}\label{eq: beta vr to vl}
		a^c(v_i,e_i) q_{e_i}^c(\lambda) = \beta_{e_i} - \lambda 
		\quad \text{where} \quad \beta_{e_i} & = \sum_{j=1}^i b_{v_j}^\P, \\
		b_v^\P & = b_v - \sum_{e \in \E(v) \setminus \E_\cycle^c} a^c(v,e) q_e^c \quad \text{for} \quad v \in \V_\cycle, \nonumber
	\end{alignat}	
	where \(a^c(v,e)\) refers to the entries of the incident matrix \(A^c\) of the cut graph \( \G^c\).
	Alternatively, the flow \( q_{e_i}^c\) can be written as
	\begin{alignat}{1}\label{eq: beta vl to vr}
		a^c(v_{i+1},e_i) q_{e_i}^c(\lambda) = \tilde{\beta}_{e_i} + \lambda 
		\quad \text{where} \quad \tilde{\beta}_{e_i} & = \sum_{j=1}^{n_\cycle -i} b_{v_{n_\cycle-j+1}}^\P.
	\end{alignat}	
	Further, the new edges \(e_\crr\) and \( e_\cl\) generated by the cut satisfy:
	\begin{equation*}
		q^c_{e_\crr}(\lambda) = \lambda \quad \text{and} \quad 
		q^c_{e_\cl}(\lambda) = \lambda 
		\quad \Rightarrow \quad
		\beta_{e_\crr} = \beta_{e_\cl} =0 \quad \text{and} \quad 
		\tilde{\beta}_{e_\crr} = \tilde{\beta}_{e_\cl} = 0.
	\end{equation*}
\end{lemma}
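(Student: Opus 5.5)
The plan is to exploit that the cut graph \(\G^c\) is a tree. Its flows are therefore uniquely determined by the mass balance \eqref{eq: model mass balance} with loads \(b^c_v\) from \eqref{eq: cut network boundary data}. Neither \(\mu\) nor the pressure law enters, so the flows are independent of \(\mu\) and of \(Z\).

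\textbf{Step 1: flows off the cycle.} For every edge \(e \in \E(v)\setminus\E_\cycle^c\) with \(v\in\V_\cycle\), removing \(e\) splits the tree \(\G^c\) into two components, one of which contains neither \(v_\cl\) nor \(v_\crr\). Summing \eqref{eq: model mass balance} over that component shows that \(q_e^c\) equals, up to orientation sign, the total load of that component. This is a constant independent of \(\lambda\). Consequently \(b_v^\P\) is a constant: the load of \(v\) together with everything hanging off \(v\) outside the cycle.

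\textbf{Step 2: the path through the cycle.} Mass balance at \(v\in\V_\cycle\) in \(\G^c\), restricted to the path \(v_\crr, v_1,\dots,v_{n_\cycle}, v_\cl\), then reads
\begin{equation*}
	\sum_{e\in\E(v)\cap\E_\cycle^c} a^c(v,e)\,q_e^c = b_v^\P .
\end{equation*}
In the orientation convention of \eqref{eq: incidence matrix} this equation and the load convention differ by a sign; I would fix signs so that the statement's formulas come out, i.e.\ treat \(-a^c(v,e)q_e^c\) as outflow.

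At the new nodes, which are leaves with loads \(\lambda\) and \(-\lambda\), one directly gets \(q^c_{e_\crr}=q^c_{e_\cl}=\lambda\). For \(e_\crr=(v_\crr,v_1)\) the supply \(-\lambda\) at \(v_\crr\) leaves through \(e_\crr\). For \(e_\cl=(v_{n_\cycle},v_\cl)\) the demand \(\lambda\) enters through \(e_\cl\). This gives \(\beta_{e_\crr}=\beta_{e_\cl}=0\), and likewise for the \(\tilde\beta\).

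\textbf{Step 3: induction along the path.} I then induct on \(i\). At \(v_1\), the path edges are \(e_\crr\) and \(e_1\), so balance gives \(a^c(v_1,e_1)q^c_{e_1} = b^\P_{v_1} - a^c(v_1,e_\crr)\lambda\). Since \(a^c(v_1,e_\crr)=1\), this equals \(b_{v_1}^\P-\lambda\), which is \eqref{eq: beta vr to vl} for \(i=1\).

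For \(i\ge2\), balance at \(v_i\) involves only \(e_{i-1}\) and \(e_i\). Using \(a^c(v_i,e_{i-1})=-a^c(v_{i-1},e_{i-1})\), which holds because each edge has one head and one foot, I obtain
\begin{equation*}
	a^c(v_i,e_i)q^c_{e_i} = b^\P_{v_i} + a^c(v_{i-1},e_{i-1})q^c_{e_{i-1}} = b^\P_{v_i}+\beta_{e_{i-1}}-\lambda,
\end{equation*}
which telescopes to \(\beta_{e_i}=\sum_{j\le i}b^\P_{v_j}\). Linearity in \(\lambda\) and independence of \(\mu\) follow at once.

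The alternative form \eqref{eq: beta vl to vr} comes from the identical induction started at \(v_\cl\) and run backwards through \(v_{n_\cycle},v_{n_\cycle-1},\dots\), with the leaf edge \(e_\cl\) carrying \(+\lambda\). Consistency of the two forms follows from \(a^c(v_{i+1},e_i)=-a^c(v_i,e_i)\) together with \(\sum_{v\in\V_\cycle}b^\P_v = 0\). This identity holds because \(\sum_v b_v=0\) and every non-cycle load is absorbed into some \(b_v^\P\).

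\textbf{Main obstacle.} The argument itself is routine linear algebra on a tree. The real difficulty is bookkeeping: keeping the incidence signs consistent with the load convention \(b_v<0\) for supply and with the orientation of the new edges. The step I would check most carefully is Step 1 — that off-cycle flows are genuinely \(\lambda\)-independent, which needs that each off-cycle subtree avoids \(v_\cl, v_\crr\) — together with the sign identity \(a^c(v_{i+1},e_i)=-a^c(v_i,e_i)\) used in the telescoping.
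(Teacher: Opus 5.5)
Your proposal is correct. The paper gives no proof of its own here and only cites \cite[Lemma 3.10]{Ulke2025} and \cite[Theorem 1]{Gugat2018}; your argument is a natural self-contained derivation of the cited result: off-cycle flows are fixed by subtree load sums, so $b_v^\P$ is $\lambda$-independent, then the nodal balances telescope along the path from $v_\crr$ to $v_\cl$, and $\sum_{v\in\V_\cycle} b_v^\P=0$ reconciles the two forms. Your sign hedge resolves the right way: the formulas hold under the paper's incidence-matrix form $A\bm{q}=\bm{b}$, which matches $b_v<0$ at supply nodes, whereas \eqref{eq: model mass balance} as printed carries the opposite sign and would give $q^c_{e_\crr}=-\lambda$.
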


\begin{remark}
	In \cref{eq: beta vr to vl}, we traverse the nodes of the former cycle from \(v_\crr = v_0\) to \( v_\cl = v_{n_\cycle +1}\), whereas in \cref{eq: beta vl to vr}, we traverse the nodes in the reversed direction.
	The values \( \beta_{e_i}\) are the partial sums of the modified loads \( b_v^\P\) starting at  \( v_1\), proceeding forward along the path connecting the nodes \(v_\crr\) and \( v_\cl\).
	In contrast, the values \( \tilde{\beta}_{e_i}\) are the reversed partial sums, traversing the nodes in the opposite direction, see \cref{fig: computation beta} for an illustration.
\end{remark}

In summary, \cref{lemma: flow linear} allows us to express \(\lambda^\pm\) in terms of the values \(\beta_{e_i}\) using the notation illustrated in \cref{fig: numbering}, i.e.,
%
\begin{align*}
	\lambda^- = \min M_\beta, 
	   \quad 
	   \lambda^+ = \max M_\beta,
       \quad \text{where} \quad 
       M_\beta = \{ \beta_{e_i} \mid 1 \le i \le n_\cycle-1 \} \cup \{0\}.
\end{align*}
It also shows that a cut edge such that \(0 \notin I_\sol\) cannot exist. As the values \(\beta_{e_\crr}\) and  \(\beta_{e_\cl}\) are independent of the cut edge, we always have  \( 0 \in I_\sol\). Thus, finding a cut edge such that \( I_\sol \subset [0,\infty)\) implies that \(0\) must lie on the left boundary of \(I_\sol\).

\begin{figure}[ht]
	\centering
	\begin{subfigure}[t]{0.45\textwidth}
		\centering
		\newcommand{\radius}{1.5} 
\newcommand{\Rscale}{0.5} 

\scalebox{0.7}{
	\begin{tikzpicture}[
		> = {Latex[scale=1.0]}, 
		line width=1pt,
		every node/.style={}
		]
		
		\tikzstyle{cycle}=[
		fill=white, 
		draw=black,
		circle,
		inner sep=1pt,
		minimum size=0pt
		]
		
		\tikzstyle{filled}=[
		fill=black, 
		draw=black,
		circle,
		inner sep=1pt,
		outer sep=0pt,
		minimum size=5pt
		]
		
		\tikzstyle{grey}=[
		fill=black!10, 
		draw=black!10,
		circle,
		inner sep=1pt,
		outer sep=0pt,
		minimum size=5pt
		]

		\foreach \i/\pos/\name in {3/0/B3, 2/60/B2, 1/120/B1, 6/180/B6, 5/240/B5, 4/300/B4} {
			\node[cycle] (\name) at ({\radius*cos(\pos)}, {\radius*sin(\pos)}) {$v_{\i}$};
		}
		
		\foreach \i/\name in {0/C3, 60/C2, 120/C1, 180/C6, 240/C5, 300/C4} {
			\node[grey] (\name) at ({2*\radius*cos(\i)}, {2*\radius*sin(\i)}) {};
		}
		
		\draw[->] (B2) -- (B1) node[midway, above] {$e_1$};
		\draw[->] (B2) -- (B3) node[midway, above right] {$e_2$};
		\draw[->] (B3) -- (B4) node[midway, below right] {$e_3$};
		\draw[->] (B4) -- (B5) node[midway, below] {$e_4$};
		\draw[->] (B6) -- (B5) node[midway, left] {$e_5$};
		
		\draw[->, color=black!10] (C1) -- (B1) {};
		\draw[->, color=black!10] (C2) -- (B2) {};
		\draw[->, color=black!10] (B3) -- (C3) {};
		\draw[->, color=black!10] (B4) -- (C4) {};
		\draw[->, color=black!10] (B5) -- (C5) {};
		\draw[-, color=black!10>] (C6) -- (B6) {};
		
		\node[cycle] (vr) at ({1.75*\radius*cos(130)}, {1.75*\radius*sin(130)}) {$v_\crr$};
		\node[cycle] (vl) at ({1.75*\radius*cos(170)}, {1.75*\radius*sin(170)}) {$v_\cl$};
		
		\draw[->] (vr) -- (B1) node[midway, below left] {$e_\crr$};
		\draw[->] (B6) -- (vl) node[midway, above right] {$e_\cl$};

		\draw[<-, fill=none, domain=180:360] plot ({\Rscale * \radius * cos(\x)}, {\Rscale * \radius * sin(\x)});
		\draw[fill=none, domain=0:120] plot ({\Rscale * \radius * cos(\x)}, {\Rscale * \radius * sin(\x)});
	\end{tikzpicture}
}
	\end{subfigure}%
	\begin{subfigure}[t]{0.45\textwidth}
		\centering
		\newcommand{\radius}{1.5} 
\newcommand{\Rscale}{0.5} 

\scalebox{0.7}{
	\begin{tikzpicture}[
		> = {Latex[scale=1.0]}, 
		line width=1pt,
		every node/.style={}
		]
		
		\tikzstyle{cycle}=[
		fill=white, 
		draw=black,
		circle,
		inner sep=1pt,
		minimum size=0pt
		]
		
		\tikzstyle{filled}=[
		fill=black, 
		draw=black,
		circle,
		inner sep=1pt,
		outer sep=0pt,
		minimum size=5pt
		]
		
		\tikzstyle{grey}=[
		fill=black!10, 
		draw=black!10,
		circle,
		inner sep=1pt,
		outer sep=0pt,
		minimum size=5pt
		]

		\foreach \i/\pos/\name in {3/0/B3, 2/60/B2, 1/120/B1, 6/180/B6, 5/240/B5, 4/300/B4} {
			\node[cycle] (\name) at ({\radius*cos(\pos)}, {\radius*sin(\pos)}) {$v_{\i}$};
		}
		
		\foreach \i/\name in {0/C3, 60/C2, 120/C1, 180/C6, 240/C5, 300/C4} {
			\node[grey] (\name) at ({2*\radius*cos(\i)}, {2*\radius*sin(\i)}) {};
		}
		
		\draw[->] (B2) -- (B1) node[midway, above] {$e_1$};
		\draw[->] (B2) -- (B3) node[midway, above right] {$e_2$};
		\draw[->] (B3) -- (B4) node[midway, below right] {$e_3$};
		\draw[->] (B4) -- (B5) node[midway, below] {$e_4$};
		\draw[->] (B6) -- (B5) node[midway, left] {$e_5$};
		
		\draw[->, color=black!10] (C1) -- (B1) {};
		\draw[->, color=black!10] (C2) -- (B2) {};
		\draw[->, color=black!10] (B3) -- (C3) {};
		\draw[->, color=black!10] (B4) -- (C4) {};
		\draw[->, color=black!10] (B5) -- (C5) {};
		\draw[-, color=black!10>] (C6) -- (B6) {};
		
		\node[cycle] (vr) at ({1.75*\radius*cos(130)}, {1.75*\radius*sin(130)}) {$v_\crr$};
		\node[cycle] (vl) at ({1.75*\radius*cos(170)}, {1.75*\radius*sin(170)}) {$v_\cl$};
		
		\draw[->] (vr) -- (B1) node[midway, below left] {$e_\crr$};
		\draw[->] (B6) -- (vl) node[midway, above right] {$e_\cl$};

		\draw[-, fill=none, domain=180:360] plot ({\Rscale * \radius * cos(\x)}, {\Rscale * \radius * sin(\x)});
		\draw[->, fill=none, domain=0:120] plot ({\Rscale * \radius * cos(\x)}, {\Rscale * \radius * sin(\x)});
	\end{tikzpicture}
}
	\end{subfigure}
	\caption{The computation of \( \beta_e\) traverses the nodes in ascending order starting at \(v_1\) (left), while \( \tilde{\beta}_e\) is computed in descending order starting at \( v_6\) (right).}
	\label{fig: computation beta}
\end{figure}
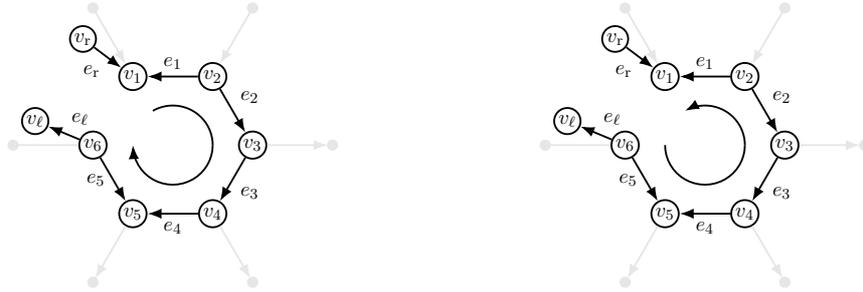

To address the question whether there exits a cut edge \( e^c\) such that \( \beta_e \ge 0\), we make two key observations: First, the modified loads \( b_v^\P\) sum up to zero, i.e.,  \(  \sum_{j=1}^{n_\cycle} b_{v_j}^\P = 0\). Second, we can retain the numbering of nodes and edges introduced by the edge \( e^c\), even if we decide to cut another edge \( \tilde{e}^c \neq e^c\).
For example, suppose we choose \( \tilde{e}^c = e_4 = (v_4,v_5)\) instead of \( (v_6, v_1)\) as the cut edge for the network in \cref{fig: numbering different cut edge}. Then we can still apply \cref{lemma: flow linear} while keeping the original numbering. In this case, the values \( \beta_{e_i}\) associated with the cut edge \( e_4\) are given by the partial sums starting at \(v_5\). Once the summation reaches the node \(  v_6\), it continuous with the node \( v_1\):
\begin{alignat*}{3}
	\beta_{e_5} & = b_{v_5}^\P, \quad & \beta_{e_6} & = b_{v_5}^\P + b_{v_6}^\P, \\
	\beta_{e_1} & =  b_{v_5}^\P + b_{v_6}^\P + b_{v_1}^\P,  
	\quad & \beta_{e_2} & =   b_{v_5}^\P + b_{v_6}^\P + b_{v_1}^\P + b_{v_2}^\P, \\
	\beta_{e_3}  & = b_{v_5}^\P + b_{v_6}^\P + b_{v_1}^\P + b_{v_2}^\P + b_{v_3}^\P, \quad & &
\end{alignat*}
To formalize this procedure, we introduce the concept of partial sums with wrapping.

\begin{figure}[ht]
	\centering
	\begin{subfigure}[t]{0.45\textwidth}
		\centering
		\newcommand{\radius}{1.5} 

\scalebox{0.7}{
	\begin{tikzpicture}[
		> = {Latex[scale=1.0]}, 
		line width=1pt,
		every node/.style={}
		]
		
		\tikzstyle{cycle}=[
		fill=white, 
		draw=black,
		circle,
		inner sep=1pt,
		minimum size=0pt
		]
		
		\tikzstyle{filled}=[
		fill=black, 
		draw=black,
		circle,
		inner sep=1pt,
		outer sep=0pt,
		minimum size=5pt
		]
		
		\tikzstyle{grey}=[
		fill=black!10, 
		draw=black!10,
		circle,
		inner sep=1pt,
		outer sep=0pt,
		minimum size=5pt
		]

		\foreach \i/\pos/\name in {3/0/B3, 2/60/B2, 1/120/B1, 6/180/B6, 5/240/B5, 4/300/B4} {
			\node[cycle] (\name) at ({\radius*cos(\pos)}, {\radius*sin(\pos)}) {$v_{\i}$};
		}
		
		\foreach \i/\name in {0/C3, 60/C2, 120/C1, 180/C6, 240/C5, 300/C4} {
			\node[grey] (\name) at ({2*\radius*cos(\i)}, {2*\radius*sin(\i)}) {};
		}
		
		\draw[->] (B2) -- (B1) node[midway, above] {$e_1$};
		\draw[->] (B2) -- (B3) node[midway, above right] {$e_2$};
		\draw[->] (B3) -- (B4) node[midway, below right] {$e_3$};
		\draw[->] (B4) -- (B5) node[midway, below] {$e_4$};
		\draw[->] (B6) -- (B5) node[midway, left] {$e_5$};
		\draw[->, dashed, color=black!30] (B6) -- (B1) node[midway, below right] {$e^c = e_6$};
		
		\draw[->, color=black!10] (C1) -- (B1) {};
		\draw[->, color=black!10] (C2) -- (B2) {};
		\draw[->, color=black!10] (B3) -- (C3) {};
		\draw[->, color=black!10] (B4) -- (C4) {};
		\draw[->, color=black!10] (B5) -- (C5) {};
		\draw[-, color=black!10>] (C6) -- (B6) {};
		
		\node[cycle] (vr) at ({1.75*\radius*cos(130)}, {1.75*\radius*sin(130)}) {$v_\crr$};
		\node[cycle] (vl) at ({1.75*\radius*cos(170)}, {1.75*\radius*sin(170)}) {$v_\cl$};
		
		\draw[->] (vr) -- (B1) node[midway, below left] {$e_\crr$};
		\draw[->] (B6) -- (vl) node[midway, above right] {$e_\cl$};

	\end{tikzpicture}
}
	\end{subfigure}%
	\begin{subfigure}[t]{0.45\textwidth}
		\centering
		\newcommand{\radius}{1.5} 

\scalebox{0.7}{
	\begin{tikzpicture}[
		> = {Latex[scale=1.0]}, 
		line width=1pt,
		every node/.style={}
		]
		
		\tikzstyle{cycle}=[
		fill=white, 
		draw=black,
		circle,
		inner sep=1pt,
		minimum size=0pt
		]
		
		\tikzstyle{filled}=[
		fill=black, 
		draw=black,
		circle,
		inner sep=1pt,
		outer sep=0pt,
		minimum size=5pt
		]
		
		\tikzstyle{grey}=[
		fill=black!10, 
		draw=black!10,
		circle,
		inner sep=1pt,
		outer sep=0pt,
		minimum size=5pt
		]

		\foreach \i/\pos/\name in {3/0/B3, 2/60/B2, 1/120/B1, 6/180/B6, 5/240/B5, 4/300/B4} {
			\node[cycle] (\name) at ({\radius*cos(\pos)}, {\radius*sin(\pos)}) {$v_{\i}$};
		}
		
		\foreach \i/\name in {0/C3, 60/C2, 120/C1, 180/C6, 240/C5, 300/C4} {
			\node[grey] (\name) at ({2*\radius*cos(\i)}, {2*\radius*sin(\i)}) {};
		}
		
		\draw[->] (B2) -- (B1) node[midway, above] {$e_1$};
		\draw[->] (B2) -- (B3) node[midway, above right] {$e_2$};
		\draw[->] (B3) -- (B4) node[midway, below right] {$e_3$};
		\draw[->, dashed, color=black!30] (B4) -- (B5) node[midway, above] {$\tilde{e}^c = e_4$};
		\draw[->] (B6) -- (B5) node[midway, left] {$e_5$};
		\draw[->] (B6) -- (B1) node[midway, above left] {$e_6$};
		
		\draw[->, color=black!10] (C1) -- (B1) {};
		\draw[->, color=black!10] (C2) -- (B2) {};
		\draw[->, color=black!10] (B3) -- (C3) {};
		\draw[->, color=black!10] (B4) -- (C4) {};
		\draw[->, color=black!10] (B5) -- (C5) {};
		\draw[->, color=black!10] (C6) -- (B6) {};
		
		\node[cycle] (vr) at ({1.75*\radius*cos(250)}, {1.75*\radius*sin(250)}) {$v_\crr$};
		\node[cycle] (vl) at ({1.75*\radius*cos(290)}, {1.75*\radius*sin(290)}) {$v_\cl$};
		
		\draw[->] (vr) -- (B5) node[midway, left] {$e_\crr$};
		\draw[->] (B4) -- (vl) node[midway, right] {$e_\cl$};
		
	\end{tikzpicture}
}
	\end{subfigure}
	\caption{A gas network with cut edge \( e^c\) and the corresponding numbering (left), and the  cut graph with cut edge \( \tilde{e}^c = e_4\) while keeping the previous numbering (right).}
	\label{fig: numbering different cut edge}
\end{figure}
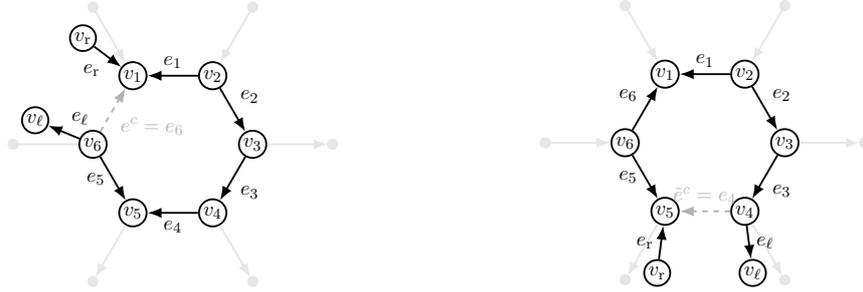

\begin{definition}[Partial Sum with Wrapping]
	Let \( (y_j)_{j=1}^n \subseteq \R\) be a finite sequence of real numbers and let \(k \in \{1, \ldots, n\}\) be a fixed index. Then the partial sum \(T_i(k) \) with wrapping starting at the element \( y_k\) is defined by
	\begin{equation*}
		T_i(k)= \begin{cases}
			\sum_{j=k}^n y_j + \sum_{j=1}^{(k+i-1)-n} y_j, & k+i-1 > n, \\
			\sum_{j=k}^{k+i-1} y_j, & k+i-1 \le n.
		\end{cases}
	\end{equation*}
\end{definition}


%

Thus, the question whether there exists a cut edge \( e^c\) such that \( \beta_e \ge 0\) becomes: Can we determine an index \( k^\ast \in \{1,\ldots,n_\cycle\}\) for the sequence \( ( b_{v_j}^\P)_{j=1}^{n_\cycle}\) such that all partial sums with wrapping \( T_i(k^\ast) \) are non-negative?
The answer gives the following result:

\begin{lemma}[Existence of Non-Negative Partial Sums with Wrapping]\label{lemma: positive partial sums}
	Let \( (y_j)_{j=1}^n \) be a finite sequence which satisfies \( \sum_{j=1}^n y_j = 0\). Then there exits an index \(k^{\ast}\) such that 
	\begin{equation*}
		T_i(k^{\ast}) \ge 0 \quad \text{for all} \quad i \in \{1, \ldots, n\}.
	\end{equation*}
\end{lemma}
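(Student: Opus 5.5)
The plan is the classical ``gas station'' argument: start right after the position where the ordinary (non-wrapped) prefix sums attain their minimum. Define the ordinary partial sums $S_m = \sum_{j=1}^m y_j$ for $m = 0, 1, \ldots, n$, with $S_0 = 0$ and, by assumption, $S_n = 0$. Since this is a finite set of values, choose an index $m^\ast \in \{0, \ldots, n-1\}$ at which $S_{m^\ast} = \min_{0 \le m \le n} S_m$. If the minimum is attained only at $m = n$, then also $S_0 = S_n$ is minimal, so we may take $m^\ast = 0$. Set $k^\ast = m^\ast + 1 \in \{1, \ldots, n\}$.

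The main step is to express each wrapped sum $T_i(k^\ast)$ as a difference of ordinary prefix sums and read off non-negativity from the minimality of $S_{m^\ast}$. There are two cases.

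In the first case, $k^\ast + i - 1 \le n$. Then
\[
T_i(k^\ast) = \sum_{j=k^\ast}^{k^\ast+i-1} y_j = S_{m^\ast + i} - S_{m^\ast} \ge 0 .
\]

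In the second case, $k^\ast + i - 1 > n$. Then
\[
T_i(k^\ast) = \sum_{j=k^\ast}^{n} y_j + \sum_{j=1}^{m^\ast + i - n} y_j = (S_n - S_{m^\ast}) + S_{m^\ast+i-n} .
\]
Using $S_n = 0$, this equals $S_{m^\ast+i-n} - S_{m^\ast}$, which is again non-negative by minimality. Here one checks that the index $m^\ast + i - n$ lies in $\{1, \ldots, m^\ast\} \subseteq \{0, \ldots, n\}$, because $i \le n$.

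There is no real obstacle. The only care needed is the index bookkeeping in the wrapped case, namely verifying that the second sum in the definition of $T_i(k)$ is exactly $S_{k+i-1-n}$, and checking the case $i = n$, where $T_n(k^\ast) = \sum_{j=1}^n y_j = 0 \ge 0$. The hypothesis $\sum_j y_j = 0$ enters precisely in the wrapped case, to cancel $S_n$. The result is then applied with $y_j = b_{v_j}^\P$, whose total is zero as observed above, to obtain a cut edge with $\beta_e \ge 0$ for all $e \in \E_\cycle^c$.
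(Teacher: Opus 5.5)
Your proposal is correct and follows essentially the same argument as the paper: start right after the index where the ordinary prefix sums are minimal, split into the non-wrapped and wrapped cases, and use $S_n=0$ to write $T_i(k^\ast)=S_{m^\ast+i-n}-S_{m^\ast}$ in the wrapped case. Your inclusion of $S_0=0$ with $m^\ast\le n-1$ also cleanly handles a boundary case that the paper's tie-breaking rule gets wrong: the paper takes the largest minimizer over $\{1,\ldots,n\}$, so whenever all prefix sums are non-negative it gets $m=n$ and hence $k^\ast=n+1\notin\{1,\ldots,n\}$, whereas your choice gives $k^\ast=1$.
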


\begin{proof}	
	We compute the partial sums \(S_k\) of the sequence \( (y_j)_{j=1}^n \) and determine their minimum. In particular, we have:
	\begin{equation*}
		m \coloneqq \argmin_{i\in\{1,\ldots,n\}} S_i
		\quad \text{where} \quad S_i \coloneqq \sum_{j=1}^i y_j
		\quad \Rightarrow \quad S_i \ge S_m \quad \text{for all} \quad i \in \{1, \ldots, n\}.
	\end{equation*}
	In case the minimal partial sum is non-unique, we choose \(m\) as the maximum among all minimizing indices \( \tilde{m}\).
	Then, the partial sums \( T_i(m+1)\) starting at the element \( x_{m+1}\) satisfy \( T_i(m+1) \ge 0\) for all \( i \in \{1, \ldots, n\}\). In order to prove this, we distinguish two cases:
	\begin{enumerate}[label=(\roman*)]
		\item Let \( m+i > n\) and recall that \( S_n = \sum_{j=1}^n y_j = 0\) holds. Then we have 
		\begin{align*}
			T_i(m+1) = \sum_{j=m+1}^n y_j + \sum_{j=1}^{m+i-n} y_j 
			= S_n - S_m + S_{m+i-n} = S_{m+i-n} - S_m \ge 0,					
		\end{align*}
		\item Let \( m+i \le n\). Then we have
		\begin{align*}
			T_i(m+1) = \sum_{j=m+1}^{m+i} y_j 
			= S_{m+i} - S_m \ge 0,					
		\end{align*}
	\end{enumerate}
	Thus, setting \( k^{\ast} = m + 1\) proves the claim.
\end{proof}

\begin{remark}\label{remark: partial sum reversed}
	If an index \( k^\ast\) satisfies \( T_i(k^\ast) \ge 0\), then summing in the reversed order and starting at the predecessor \( x_{k^\ast -1}\) yields non-positive partial sums. 
	
\end{remark}

Next, we transfer \cref{lemma: positive partial sums} to our application -- the existence non-negative \(\beta_e\).

\begin{lemma}\label{lemma: beta positiv}
	Let \( \G = (\V, \E)\) be a network with one cycle. Then, there exists a cut edge \( e^c \in \E_\cycle\) such that the corresponding values \( \beta_e\), as defined in \cref{lemma: flow linear}, satisfy
	\begin{equation*}
		\beta_e \ge 0 \quad \text{for all} \quad e \in \E_\cycle,
	\end{equation*}
	up to a possible change the orientation of an edge \( e \in \E_\cycle\) in the cycle.
\end{lemma}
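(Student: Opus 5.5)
We combine \cref{lemma: flow linear} with \cref{lemma: positive partial sums}. First we fix an arbitrary reference cut edge \(e^c\) and number the cycle as in \cref{fig: numbering}, giving nodes \(v_1,\ldots,v_{n_\cycle}\) and edges \(e_1,\ldots,e_{n_\cycle}\) with \(e_{n_\cycle}=e^c\). We then consider the modified loads \(y_j \coloneqq b_{v_j}^\P\) for \(j=1,\ldots,n_\cycle\).

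The first step is to check that \(\sum_{j=1}^{n_\cycle} y_j = 0\). The modified loads are the net loads the cycle nodes see once the flows on the attached trees are accounted for. Since \(\sum_{v\in\V} b_v = 0\) and the flows on the non-cycle edges are determined by tree mass balance, summing \(b_v - \sum_{e\notin \E_\cycle^c} a^c(v,e)q_e^c\) over \(v\in\V_\cycle\) gives the total load of the network. Each hanging tree's loads cancel against the flow on the edge connecting it to the cycle, so the sum is zero. This is the observation already stated before the definition of partial sums with wrapping, and it can also be read off from \cref{lemma: flow linear}: evaluating \eqref{eq: beta vr to vl} and \eqref{eq: beta vl to vr} on the same edge forces \(\beta_{e_i}+\tilde\beta_{e_i}=\sum_j b^\P_{v_j}\), which is consistent only if this total vanishes. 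With this in hand, \cref{lemma: positive partial sums} yields an index \(k^\ast\) such that \(T_i(k^\ast)\ge 0\) for all \(i\).

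The second step translates \(k^\ast\) into a cut edge. We take \(\tilde e^c \coloneqq e_{k^\ast-1}\), the edge joining \(v_{k^\ast-1}\) and \(v_{k^\ast}\), with indices read cyclically so that \(k^\ast=1\) gives \(e_{n_\cycle}\). We keep the original numbering, as in the example of \cref{fig: numbering different cut edge}. Applying \cref{lemma: flow linear} to the cut graph with respect to \(\tilde e^c\), the node \(h(\tilde e^c)\) plays the role of the first node after \(v_\crr\). The values \(\beta_e\) for the remaining cycle edges are then exactly the wrapped partial sums \(T_i(k^\ast)\), \(i=1,\ldots,n_\cycle-1\), and these are non-negative. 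The new edges satisfy \(\beta_{e_\crr}=\beta_{e_\cl}=0\), so \(\beta_e\ge 0\) holds on all of \(\E^c_\cycle\).

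The main obstacle is orientation. \Cref{lemma: flow linear} numbers from \(v_1=h(e^c)\) onward, so the partial sums start at the head of the cut edge. The chosen edge \(e_{k^\ast-1}\) may instead be oriented from \(v_{k^\ast}\) to \(v_{k^\ast-1}\), in which case the traversal starting at \(h(\tilde e^c)\) runs in the opposite direction and produces the reversed sums \(\tilde\beta\). Likewise, the signs \(a^c(v_i,e_i)\) in \eqref{eq: beta vr to vl} depend on the orientation of each edge. We would handle this with the proviso in the statement. If \(\tilde e^c\) points the wrong way, we reverse its orientation, replacing \(q\) by \(-q\) on that pipe, which leaves the model \labelcref{eq: full model} invariant since \(F\) depends on \(q\) only through \(q^2\) and \(q\vert q\vert\). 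Alternatively, we invoke \cref{remark: partial sum reversed}, which says that summing in the reverse direction from the predecessor gives non-positive sums, corresponding to the sign-flipped parametrization. In either case the resulting \(\beta_e\) are non-negative. Compressor edges on the cycle must not be flipped, so there we would rely on the reversed-sum alternative, or argue that the choice of \(k^\ast\) can be adjusted to avoid them.
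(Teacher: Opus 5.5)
Your proposal is correct and follows the paper's proof: you apply \cref{lemma: positive partial sums} to the modified loads $b^{\P}_{v_j}$, cut the cycle edge joining $v_{k^\ast-1}$ and $v_{k^\ast}$, and flip that edge if it is oriented from $v_{k^\ast}$ to $v_{k^\ast-1}$. Your argument that the modified loads sum to zero supplies a step the paper only asserts, and your compressor worry is moot because the paper excludes compressors on the cycle; note, however, that \cref{remark: partial sum reversed} on its own only gives $\beta_e \le 0$, so it is the edge flip, which you correctly check leaves the model invariant, that proves the claim.
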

\begin{proof}
	Fix an arbitrary edge \( \tilde{e} \in \E_\cycle\) of the cycle -- not necessarily the cut edge -- and number the nodes and edges of the cycle as described in \cref{fig: numbering}.
	Since the values \( \beta_{e_i}\) are the partial sums (with wrapping) of the modified loads \( b_v^\P\), cf. \cref{lemma: flow linear}, and since the modified loads sum up to zero, we can apply \cref{lemma: positive partial sums} to the sequence \( (b_{v_j}^\P)_{j=1}^{n_\cycle}\).
	Thus, we obtain an index \( k^\ast\) such that the corresponding partial sums with wrapping \( T_i(k^\ast)\) are all non-negative, i.e., 
	\begin{equation*}
		T_i(k^\ast) \ge 0 \quad \text{for all} \quad i = 1, \ldots, n_\cycle.
	\end{equation*}
	
	Now, we have to distinguish between the case where \( e_{k^\ast} = (v_{k^\ast-1}, v_{k^\ast}) \) and the case where \( e_{k^\ast} = (v_{k^\ast}, v_{k^\ast-1}) \).
	In the first case, we choose \( e_{k^\ast} = (v_{k^\ast-1}, v_{k^\ast})\) as the cut edge and apply \cref{lemma: flow linear}, which yields
	\begin{equation*}
		\beta_{e_i} = T_i(k^\ast) \ge 0
		\quad \text{for all} \quad i = 1, \ldots, n_\cycle.
	\end{equation*}
	
	In the second case, we have \( e_{k^\ast} = (v_{k^\ast}, v_{k^\ast-1}) \).	
	However, in this case setting the cut edge to \( e^c = e_{k^\ast}\) does not lead to the desired results because the computation of \( \beta_e\) relies on traversing the nodes of the cut cycle from \( v_\crr\) to \( v_\cl\).	
	But cutting the edge \(e_{k^\ast}\) and computing the partial sums \(T_i(k^\ast)\) corresponds to traversing the nodes of the cut cycle form \( v_\cl\) to \( v_\crr\), which means that 
	\begin{equation*}
		T_i(k^{\ast}) = \tilde{\beta}_{e_i},
	\end{equation*}
	where \( \tilde{\beta}_e\) is defined in \cref{eq: beta vl to vr}; see also \cref{fig: computation beta}. Together with \cref{remark: partial sum reversed} this implies that \( \beta_e \le 0\). 
	We resolve this problem by replacing the network \(\G\) with the network \( \tilde{\G} = (\V, \tilde{\E})\) where the edge \( e_{k^\ast}\) is flipped, i.e., \( \tilde{e}_{k^\ast} = (v_{k^\ast-1}, v_{k^\ast}) \in \tilde{\E}\). Applying the above analysis to the network \( \tilde{\G}\), we arrive at the same index \( k^\ast\) but since we flipped the edge orientation, we fall into in the first case instead.
\end{proof}

\begin{remark}
	When the second case in the proof applies, we have to continue the analysis of finding a solution to the gas flow model~\labelcref{eq: full model} with the modified network \( \tilde{\G}\).
	Once we found a solution for the network \( \tilde{\G}\), we recover a solution for the original network \( \G\) by reversing the sign of the flow on the flipped edge.
\end{remark}

With \Cref{lemma: beta positiv} at hand, we can select a cut edge such that \( \beta_e \ge 0 \) for all \( e \in \E_\cycle\), which allows us to prove that the non-linear system~\labelcref{eq: Hp and Heta} has at least one solution also for real gases with an arbitrary compressibility factor \(Z\).

\begin{lemma}\label{lemma: common root}
	Let \( \G = (\V, \E) \) be a connected, directed graph with one cycle. Further, let \( e^c\) be a \emph{suitable} edge belonging to the cycle and let \( \G^c\) be the corresponding cut graph with the boundary data given in \cref{eq: cut network boundary data}. Then, the following non-linear system, also compare \cref{eq: Hp and Heta}, has at least one solution:
	\begin{equation*}
		\H_{\eta}(\lambda, \mu)  = 0
		\quad \text{and} \quad 
		\H_p(\lambda, \mu)  = 0.
	\end{equation*}
\end{lemma}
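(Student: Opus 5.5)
We follow the three-step scheme outlined before \cref{fig: overview proof}, choosing the cut edge $e^c$ via \cref{lemma: beta positiv} (possibly after flipping one edge of the cycle). Then $\beta_e \ge 0$ for all $e \in \E_\cycle^c$. Since $\beta_{e_\crr} = \beta_{e_\cl} = 0$, \cref{lemma: beta} gives $\lambda^- = \min M_\beta = 0$. Every solution therefore lies in $I_\sol = [0, \lambda^+]$ with $\lambda^+ \ge 0$. If $\lambda^+ = 0$, the interval degenerates to $\{0\}$, and \cref{lemma: beta} yields both $\H_p(0,\mu) \le 0$ and $\H_p(0,\mu)\ge 0$. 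Hence $\H_p(0,\mu)=0$ for every $\mu$, and it only remains to take $\mu = \mu_\eta(0)$. From now on we assume $\lambda^+ > 0$.

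\emph{Step 1: root of $\H_\eta$.} For $\lambda \in I_\sol \subset [0,\infty)$ the flow through the cut goes from $v_\crr$ to $v_\cl$, so $v_\crr$ is the supply node with composition $\mu$. As recalled before \cref{fig: overview proof}, $\mu_\eta(\lambda) = \eta^c_{v_\cl}(\lambda)$ is the unique root of $\H_\eta(\lambda,\cdot)$. This root curve is independent of $Z$, so we take it over from \cite[Lemma 3.20]{Ulke2025}. By \cref{lemma: composition continuous}, $\mu_\eta$ is continuous on $(0,\lambda^+]$ and right-sided continuous at $0$, so it is continuous on the closed interval $[0,\lambda^+]$ viewed as a function on that interval. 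The one point needing care is that at $\lambda = 0$ the convention $\lambda \ge 0$ in \cref{eq: cut network boundary data} is the one used, which is exactly what makes right-continuity the relevant notion.

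\emph{Step 2: continuity of $g(\lambda) = \H_p(\lambda,\mu_\eta(\lambda))$ on $[0,\lambda^+]$.} This is the step I expect to be the main obstacle. \cref{korollar: Hp continuous} only gives \emph{separate} continuity in $\mu$ and in $\lambda$, whereas the composition with $\mu_\eta$ needs \emph{joint} continuity in $(\lambda,\mu)$ on $[0,\lambda^+]\times[0,1]$. My plan is to rerun the induction of \cref{lemma: pressure continuous} with the joint statement as the induction hypothesis. The inputs are:
\begin{itemize}
\item $q^c_e$ depends continuously on $\lambda$ alone;
\item $\eta^c_e(\lambda,\mu)$ is jointly continuous on $[0,\infty)\times[0,1]$. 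Here I would argue from the explicit mixing formula \labelcref{eq: model mixing} on the tree: on $\lambda \ge 0$ the flow directions are locally constant away from the finitely many roots $\beta_e$, and the mixing weights are continuous. This mirrors \cite[Lemmas 3.12, 3.13]{Ulke2025}, which hold jointly.
\end{itemize}
Then $p^c_{v_{i+1}} = g(\eta,q,F(\eta,q,p^c_{v_i}) + S(\eta,q))$ is jointly continuous, because $g$ is continuous in all arguments by \cref{lemma: inverse of F} and $F$ is continuous, being differentiable in $p$ and, by \cref{assumption 2}, in $\eta$. Compressor edges are trivial. Substituting $\mu=\mu_\eta(\lambda)$ then gives continuity of $g$ on $[0,\lambda^+]$, with the endpoint $0$ handled by right-continuity only.

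\emph{Step 3: sign change and conclusion.} By \cref{lemma: beta}, $g(0) = \H_p(\lambda^-,\mu_\eta(0)) \le 0$ and $g(\lambda^+) \ge 0$. The intermediate value theorem on $[0,\lambda^+]$ yields some $\lambda^\ast$ with $g(\lambda^\ast) = 0$. The pair $(\lambda^\ast, \mu_\eta(\lambda^\ast))$ then solves both $\H_\eta = 0$ and $\H_p = 0$. If an edge was flipped in choosing $e^c$, the solution of the original network is recovered by the remark following \cref{lemma: beta positiv}.
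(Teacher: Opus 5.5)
Your proposal follows the paper's proof: you choose the cut edge by \cref{lemma: beta positiv} so that $I_\sol=[0,\lambda^+]$, and you use the root curve $\mu_\eta=\eta^c_{v_\cl}$, which is continuous on $I_\sol$ because only right-continuity at $\lambda=0$ is needed. You take the endpoint signs from \cref{lemma: beta} and conclude with the intermediate value theorem. Your extra care about continuity is justified. The paper's proof simply asserts that $\H_p$ is continuous on $I_\sol\times[0,1]$ by citing \cref{korollar: Hp continuous}. That corollary only states separate continuity in $\lambda$ and in $\mu$, whereas composing with $\mu_\eta$ needs joint continuity. Your degenerate case $\lambda^+=0$ is handled correctly.

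One input to your Step 2 fails as stated: joint continuity of the \emph{edge} compositions $\eta^c_e$ on $[0,\infty)\times[0,1]$. Take a cycle edge with root $\beta_e\in I_\sol$. The flow $q^c_e$ changes sign at $\lambda=\beta_e$, so by \cref{eq: compostion on edge} $\eta^c_e$ switches between $\eta^c_{f(e)}$ and $\eta^c_{h(e)}$, which in general differ. Your sketch (directions locally constant away from the $\beta_e$, continuous mixing weights) only gives continuity of the \emph{nodal} compositions. The paper's proof of \cref{lemma: pressure continuous} makes the same slip.

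The repair is short.
\begin{itemize}
\item Nodal compositions are jointly continuous on $I_\sol\times[0,1]$. For fixed $\lambda$ they are convex combinations of the supply compositions with $\lambda$-dependent weights, so $\eta^c_v(\lambda,\mu)=(1-\mu)\,\eta^c_v(\lambda,0)+\mu\,\eta^c_v(\lambda,1)$. Continuity in $\lambda$ for $\mu\in\{0,1\}$ then suffices.
\item For the pressure step, write $p^c_{v_{i+1}}=\Phi(\eta^c_{e_i},q^c_{e_i},p^c_{v_i})$ with $\Phi(\eta,q,p)=g\bigl(\eta,q,F(\eta,q,p)+S(\eta,q)\bigr)$. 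Note that $\Phi(\eta,0,p)=p$ for every $\eta$.
\item On each side of $\beta_e$, $\eta^c_{e_i}$ coincides with one fixed, jointly continuous nodal composition. Approaching $\beta_e$ from either side therefore gives the same limit $p^c_{v_i}(\beta_e,\mu)$, which is also the value at $\beta_e$.
\end{itemize}
With this change your induction yields joint continuity of $\H_p$ on $[0,\lambda^+]\times[0,1]$, and the rest of your argument stands.

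A minor point: separate differentiability of $Z$ in $p$ and in $\eta$ does not by itself give joint continuity of $F$. You have to take that joint regularity from \cref{lemma: F} and \cref{lemma: inverse of F}, exactly as the paper does.
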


\begin{proof}
	First, we determine a suitable cut edge \(e^c\) with \cref{lemma: beta positiv} and we set the interval \(I_\sol = [\lambda^-, \lambda^+]\) according to \cref{lemma: beta}.
	Then the function \( g(\lambda) = \H_p(\lambda, \mu_\eta(\lambda))\) is continuous on \(I_\sol\) as the root curve \(\mu_{\eta}\) is continuous on \(I_\sol\) with \cref{lemma: composition continuous} and the function \( \H_p\) is continuous on \(I_\sol \times [0,1]\) with \cref{korollar: Hp continuous}.	
	Moreover, by applying \cref{lemma: beta}, we obtain that \(g\) attains opposite sign on the boundary of \(I_\sol\).
	Finally, the intermediate value theorem yields that the function \( g\) has at least one root \( \lambda^\ast\) in \( I_\sol\). Hence, due to the definition of the curve \(\mu_\eta\) and the function \(g\), we obtain that \( (\lambda^\ast, \mu_\eta(\lambda^\ast))\) solves the non-linear system~\labelcref{eq: Hp and Heta}.	
\end{proof}

Finally, we are able to proof the \cref{thm: existence cycle}, thereby extending the result that the gas flow model~\labelcref{eq: full model} has at least one solution also for arbitrary compressibility factors. 

\begin{proof}[Proof of \cref{thm: existence cycle}]
	Choose an edge \( e^c \) belonging to the cycle of the graph \( \G\), which satisfies \cref{lemma: beta positiv}, and set it as the cut edge. Further, let \( \G^c = (\V^c, \E^c)\) be the corresponding cut graph with parameter-dependent boundary data as defined in \cref{eq: cut network boundary data}. Then, the non-linear system~\labelcref{eq: Hp and Heta} has at least one solution, see \cref{lemma: common root}. Hence, by \cite[Lemma 3.8]{Ulke2025},  the gas flow model~\labelcref{eq: full model} admits at least one solution.
\end{proof}

\section{A Numerical Example on GasLib-11}

To illustrate the practical implications of the theoretical results derived before, we present a numerical example demonstrating the influence of different compressibility factor models on the resulting steady states. In particular, we show that the choice of compressibility model affects not only the pressure profiles (cf., \cref{fig:simulationCompressibilityFactor}) but also the resulting hydrogen concentrations in the network. The simulations are carried out on the GasLib-11\footnotemark[1] benchmark network (see \cref{fig:gaslib11}), which consists of $11$ nodes, $8$ edges, two compressor stations, and one valve that can be interpreted as a friction-free pipe. The network contains a single cycle, making it well suited to highlight the effects of model choice in a setting consistent with the analytical framework developed in this work.

\footnotetext[1]{Network data can be found on \url{https://gaslib.zib.de/}}

\begin{figure}[htbp]
    \centering
    \scalebox{0.85}{
\begin{tikzpicture}[
		> = {Latex[scale=1.0]}, 
		line width=1pt,
		every node/.style={}
		]
		
		\tikzstyle{empty}=[
		fill=white, 
		draw=black,
		circle,
		inner sep=1pt,
		minimum size=2pt
		]

\pgfmathsetmacro{\L}{1.6}        
\pgfmathsetmacro{\h}{1.25}        
\pgfmathsetmacro{\s}{0.6}        

\node[empty] (v1) at (0,0) {$1$};
\node[empty] (v2) at (\L,0) {$2$};
\node[empty] (v3) at (2*\L,0) {$3$};

\node[empty] (v4) at (3*\L, \h) {$4$};
\node[empty] (v5) at (3*\L,-\h) {$5$};
\node[empty] (v6) at (3*\L,-2*\h) {$6$};
\node[empty] (v7) at (3*\L, 2*\h) {$7$};

\node[empty] (v8) at (4*\L,0) {$8$};
\node[empty] (v9) at (5*\L,0) {$9$};
\node[empty] (v10) at (6*\L, \s) {$10$};
\node[empty] (v11) at (6*\L,-\s) {$11$};

\draw[->] (v1) -- node[midway, above] {$e_1$} (v2);
\draw[->] (v2) -- node[midway,above] {$cs_1$} (v3);

\draw[->] (v3) -- node[midway,above] {$e_2$} (v4);
\draw[->] (v4) -- node[midway,right] {$e_4$} (v7);
\draw[->] (v4) -- node[midway,above] {$e_5$} (v8);

\draw[->] (v6) -- node[midway,right] {$e_3$} (v5);
\draw[->] (v5) -- node[midway,below] {$e_6$} (v8);

\draw[->] (v8) -- node[midway,above] {$cs_2$} (v9);
\draw[->] (v9) -- node[midway,above] {$e_7$} (v10);
\draw[->] (v9) -- node[midway,below] {$e_8$} (v11);

\draw[->, dashed] (v3) -- node[midway, below] {$v$} (v5);

\node (I1) at (0, -\h) {Inflow};
\node[draw=none] (I2) at (\L, -\h) {Inflow};
\node[left=0.7cm of v6, draw=none] (I3) {Inflow};

\draw[->, thick] (I1) -- (v1);
\draw[->, thick] (I2) -- (v2);
\draw[->, thick] (I3) -- (v6);

\node[right=0.7cm of v7, draw=none] (O1) {Outflow};
\node[right=0.7cm of v10, draw=none] (O2) {Outflow};
\node[right=0.7cm of v11, draw=none] (O3) {Outflow};

\draw[->, thick] (v7) -- (O1);
\draw[->, thick] (v10) -- (O2);
\draw[->, thick] (v11) -- (O3);

\end{tikzpicture}
}
    \caption{GasLib-11 with eleven nodes, eight pipes, two compressor stations and one valve.}
    \label{fig:gaslib11}
\end{figure}
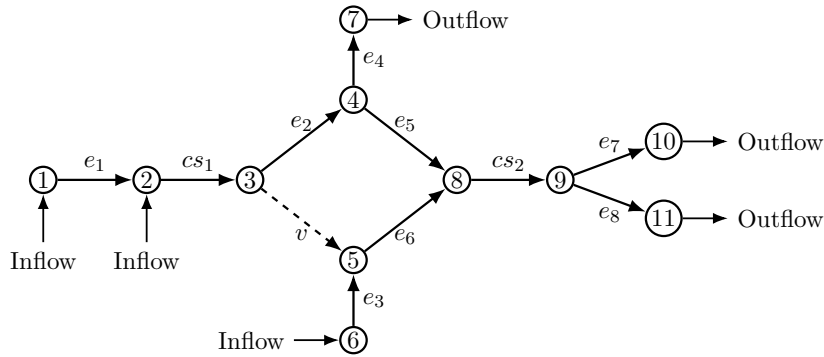

In this example, natural gas consists of $90\%$ methane, $6\%$ ethane and $4\%$ propane. For the readers' convenience we assume that pipe friction coefficient, pipe diameter and pipe length are equal for every pipe $e_i$. The network and gas parameters are listed in \cref{tab:exampleParameters}.
The gas pressure at the inflow nodes and the gas demand at the outflow nodes is given by
\begin{equation*}
    \begin{pmatrix}
        p_1 & p_2 & p_6
    \end{pmatrix}^\top 
    = \begin{pmatrix}
        60 & 58 & 63
    \end{pmatrix}^\top \text{bar}, \qquad 
    \begin{pmatrix}
        b_7 & b_{10} & b_{11} 
    \end{pmatrix}^\top
    = \begin{pmatrix}
        120 & 80 & 90
    \end{pmatrix}^\top \frac{\text{kg}}{\text{m}^2 \text{s}}.
\end{equation*}

\begin{table}[htbp]
    \centering
    \begin{tabular}{llll}
        \toprule
        Parameter  & Variable  & Value & Unit  \\
        \midrule
        pipe friction coefficient & $\lambda_\fr$ &  $0.05$ & \\
        pipe diameter & $D$ & $0.5$ & m \\
        pipe length & $L$ & $10$ & km \\
        universal gas constant & $R$ & $8.3145$ & J/(mol K)$^{-1}$ \\
        gas temperature & $T$ & $283.15$ & K \\
        critical pressure of hydrogen & $p_{c,\text{H}_2}$ & $13.15$ & bar \\
        critical pressure of natural gas & $p_{c,\text{NG}}$ & $46.01$ & bar \\
        critical temperature of hydrogen & $T_{c,\text{H}_2}$ & $33.19$ & K \\
        critical temperature of natural gas & $T_{c,\text{NG}}$ & $204.62$ & K \\
        \bottomrule
    \end{tabular}
    \caption{Parameters and coefficients for the example on GasLib-11}
    \label{tab:exampleParameters}
\end{table}

The first compressor station is switched off, i.e., $\gamma_{cs_1} = 1$, and for the second compressor station we have $\gamma_{cs_2} = 1.2$. Further, we assume that at node $1$, pure natural gas is injected, at node $2$, pure hydrogen is injected and at node $6$, a composition of $25\%$ hydrogen and $75\%$ natural gas is injected. A solution of the model \eqref{eq: full model} exists for the approximations \eqref{eq:compressibilityConstant}, \eqref{eq:compressibilityLinear} and \eqref{eq:compressibilityQuadratic} due to \cref{thm: existence cycle}. The solution is shown in \cref{fig:gaslib11Simulation}, it was computed using the \textit{AMPL} software with the \textit{ipopt} solver (see \cite{ampl2002}), while the pictures were created in \textit{MATLAB\textsuperscript{\textregistered} 2023b}. 

\begin{figure}[htbp]
    \centering
    \includegraphics[trim=20mm 0mm 20mm 0mm, clip, width=0.75\textwidth]{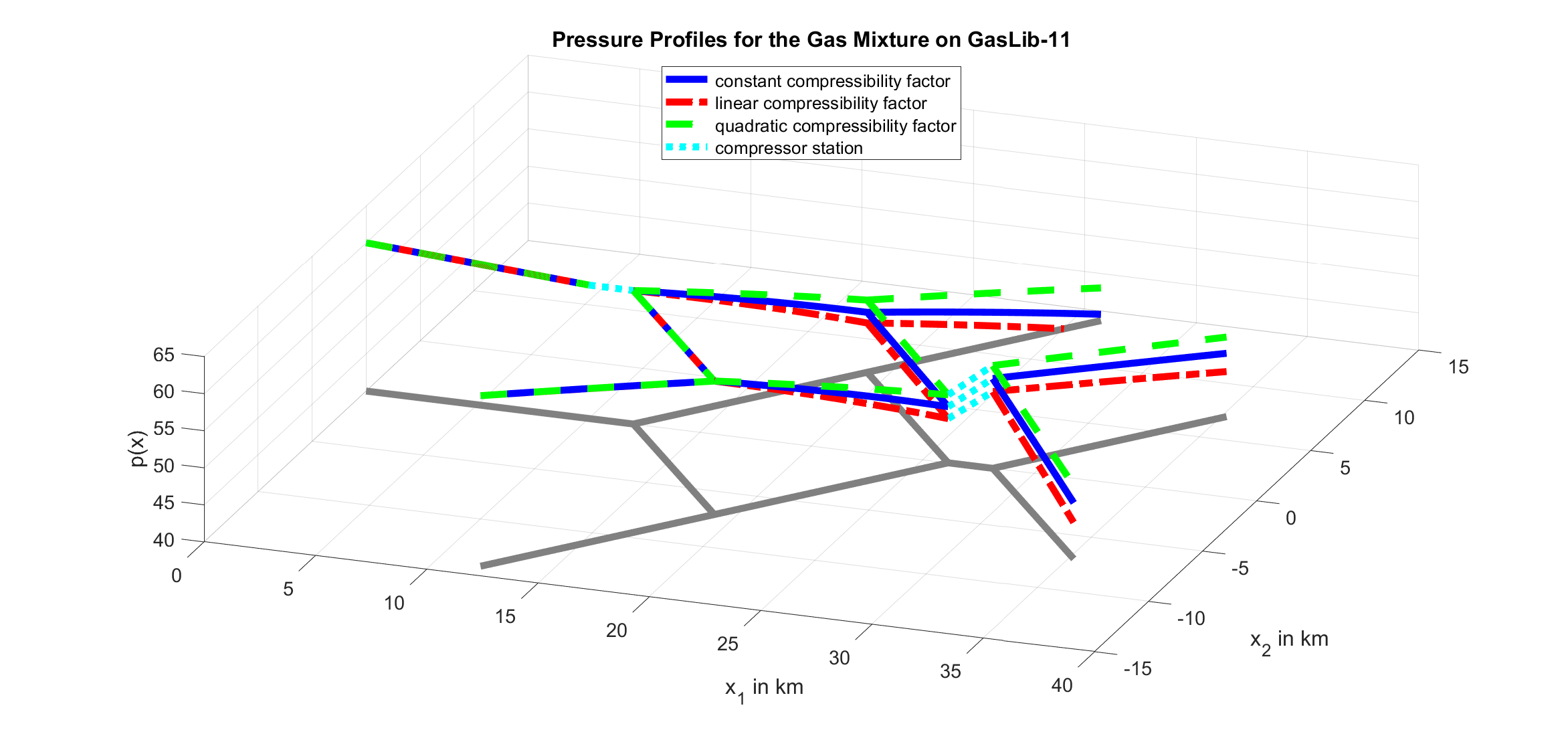}
    \caption{Simulation results for the mixing model on the GasLib--11}
    \label{fig:gaslib11Simulation}
\end{figure}

The difference in the pressure profiles is clearly visible at the outflow nodes. While the constant compressibility model \eqref{eq:compressibilityConstant} leads to $40.85$ bar, $48.54$ bar and $47.54$ bar at the outflow nodes, the linear compressibility model \eqref{eq:compressibilityLinear} leads to lower pressures of $38.64$ bar, $46.08$ bar and $44.88$ bar, and the quadratic compressibility model \eqref{eq:compressibilityQuadratic} leads to higher pressures given by $44.41$ bar, $50.73$ bar and $49.87$ bar. This observation is consistent with the simulation results shown in \cref{fig:simulationCompressibilityFactor}: the longer the transport distance - and, correspondingly, the further downstream in the network - the more pronounced the differences in the pressure profiles between the respective compressibility models become. However, in contrast to the simulation on a single pipe in \cref{fig:simulationCompressibilityFactor}, the network structure implies that the inflows, and thus the gas compositions in the pipes, also depend on the compressibility model. These results are given in \cref{tab:inflowAndComposition}, where $q_{\text{in}}$ is the inflow at the nodes $1$, $2$ and $6$, and $\eta_{\text{out}}$ is the composition at the nodes $7$, $10$ and $11$. 

\begin{table}[htbp]
    \centering
    \resizebox{\columnwidth}{!}{%
    \begin{tabular}{lccc}
        \toprule
         & constant approximation & linear approximation & quadratic approximation  \\
         \midrule 
         & & & \\[-10pt]
         \hspace{.08cm} 
         $q_{\text{in}}$ & $\begin{pmatrix}
             134.36 & 59.09 &96.55
         \end{pmatrix} \frac{\text{kg}}{\text{m}^2\text{s}}$  & \hspace{.08cm} $ \begin{pmatrix}
             147.01 & 60.30 & 82.69
         \end{pmatrix} \frac{\text{kg}}{\text{m}^2\text{s}}$ & \hspace{.08cm} $\begin{pmatrix}
             146.98 & 45.49 & 97.53
         \end{pmatrix} \frac{\text{kg}}{\text{m}^2\text{s}}$ \\
         & & & \\[-5pt]
         $\eta_{\text{out}} $& $\begin{pmatrix}
             0.4429 & 0.4159 & 0.4159
         \end{pmatrix}$ & $ \begin{pmatrix}
             0.4204 & 0.3926 & 0.3926
         \end{pmatrix}$ & $\begin{pmatrix}
             0.4348 & 0.3876 & 0.3876
         \end{pmatrix}$ \\
         & & & \\[-10pt]
         \bottomrule
    \end{tabular}%
    }
    \caption{Gas flow at the entry nodes and gas composition at the exit nodes}
    \label{tab:inflowAndComposition}
\end{table}

\section{Conclusion}

Different compressibility factor models for hydrogen-natural gas mixtures used in the literature, lead to noticeably different steady states, even with identical boundary conditions. 
This highlights that analytical results for the flow of gas mixtures should not rely on a specific functional form of the compressibility factor, but rather cover a general class of models.


In this work, we established the existence of steady states for the transport of gas mixtures in pipeline networks for a broad class of non-constant, composition-dependent compressibility factors. The analysis applies to tree-shaped networks, as well as networks containing a cycle and also includes compressor stations. 
In contrast to ideal gas, for real gases the pressure can only be represented implicitly.
Furthermore, the mixture model exhibits an additional difficulty: both pressure and composition depend on the flow direction, leading to discontinuities in the modeling. 
The existence proof is based on an implicit formulation of the steady-state pressure along pipes, combined with a continuity analysis. 
For networks with a cycle, the argument relies on cutting the cycle to obtain a tree-shaped network, solving the resulting problem with parameter-dependent boundary data, and exploiting the topological structure of the network to control the dependence of the pressure on flow and composition. This approach allows to overcome the lack of an explicit representation for the pressure and the discontinuous dependence induced by considering gas mixtures.

The results presented here provide a rigorous analytical foundation for further investigations of hydrogen-natural gas transport in pipeline networks. In particular, they open the door to optimal control problems for compressor operation and network management. Although such problems are well established for single-gas transport networks (see, e.g., \cite{BakerEtAl2022, BandaHerty2011, Gugat2024, Hante2020, HanteEtAl2017, LiersEtAl2021, SchusterHabil2026}), only numerical studies exist so far for gas mixture transport (see, e.g., \cite{Brodskyi2025, KaziSundarZlotnik2024, NayakGrundel2022}), and a comprehensive analytical treatment is still missing. The existence results derived in this paper constitute a first step towards a systematic analysis of optimal control and optimization problems for gas mixture networks with realistic real-gas effects.



%
%
\paragraph*{Acknowledgements}
Two of the authors were supported by the German Research Foundation (DFG) in the Collaborative Research Center CRC/Transregio 154, Mathematical Modelling, Simulation and Optimization using the Example of Gas Networks, Project C03, Projektnummer 239904186 (Michael Schuster) as well as under the grant GO 1920/12-1, Projektnummer 526006304 (Simone G\"ottlich). 
\printbibliography{}
\end{document}